\author[Fialowski]{Alice Fialowski}
\address{
Alice Fialowski\\
E\"otv\"os Lor\'and University
Budapest and University P\'ecs, Hungary}
\email{fialowsk@cs.elte.hu, fialowsk@ttk.pte.hu}
\author[Mandal]{Ashis Mandal}
\address{
Ashis Mandal\\
Indian Institute of Technology
Kanpur, India}
\email{amandal@iitk.ac.in, ashismandal.r@gmail.com}
\subjclass{14D15,13D10,14B12,16S80,16E40,\\17B55,17B70}
\keywords{Leibniz algebras, deformations, invariant inner products}
\newtheorem{thm}{Theorem}[section]
\theoremstyle{definition}
\newtheorem{prop}[thm]{Proposition}
\newtheorem{dfn}[thm]{Definition}
\newtheorem{rem}{Remark}
\def \C{\mbox{$\mathbb C$}}
\def \N{\mbox{$\mathbb N$}}
\def \sl{\mathfrak{sl}}
\def\and{\mbox{ \rm and }}
\def\g{\mathcal g}
\def\g{\mathfrak{g}}
\begin{document}
\setlength{\multlinegap}{0pt}
\title[On metric Leibniz algebras and deformations]
{On Metric Leibniz Algebras and Deformations}

\address{}%
\email{}%

\thanks{}%
\subjclass{}%
\keywords{}%

\date{\today}
\begin{abstract}
In this note we consider low dimensional metric Leibniz algebras with an invariant inner product over the complex numbers up to dimension $5$. We study their deformations, and give explicit formulas for the cocycles and deformations. We identify among those the metric deformations.
\end{abstract}
\maketitle

\section{Introduction}
Leibniz algebras with an invariant inner product form a very important special class of Leibniz algebras. They have the extra property that they have an invariant, symmetric, non-degenerate bilinear form. We know that any semisimple Leibniz algebra has such a form, but there are also other objects with this property. In this work, we consider Leibniz algebras over the complex numbers. In that case, all inner products are equivalent.
Lie or Leibniz algebras with an invariant inner product are also referred to as \emph{metric} Lie/Leibniz algebras or \emph{quadratic}, \emph{symmetric self-dual}, \emph{metric kinematical} Lie algebras in the literature (see \cite{MR, FS, Bor, FO}), and include the diamond and oscillator algebras as examples of solvable algebras with an invariant inner product. For a good survey on metric Lie algebras and other metric non-associative structures see \cite{O1, Bor}. On the moduli space of low dimensional metric Lie algebras see \cite{FP}.

These Lie or Leibniz algebras are interesting for several reasons. Let $G$ be the corresponding connected and simply connected Lie group of the Lie algebra $\g$. The inner product on $\g$ induces a left-invariant Riemannian metric on $G$ in a natural way. Such Lie algebras correspond to special pseudo-Riemannian symmetric spaces, see \cite{KO3, O2}.
They also show up in the formulation of some physical problems, like in the Adler-Kostant-Symes scheme \cite{A, K, S, O2} or in gauge theory and conformal field theory, as precisely the Lie algebras for which a Sugawara construction exists \cite{Su, FO}. An analogous setup also appears for Leibniz algebras.

Metric Leibniz algebras are also of great importance.  A good summery of the known results are in \cite{BH1, BH2}. In applications in physics, low dimensional metric Leibniz algebras are especially important. In describing such objects, deformation theory is a good tool. With the help of deformations, we can find new objects, and among those search for the metric ones. The infinitesimal deformations are characterized by special $2$-cocycles, which form a metric object with invariant, non-degenerate, symmetric bilinear form. If the deformation has no higher order terms ( terms with $t^n$ coefficients and $n\geq 2$), then the infinitesimal deformation gives  an honest deformation. 

In our paper we compute metric Leibniz deformations of $2$, $3$, $4$ and $5$-dimensional 
metric Leibniz algebras, including the Lie ones, as they also can give Leibniz deformations (see \cite{FM}). A list of nilpotent indecomposable $5$-dimensional metric non-Lie Leibniz algebras exists in \cite{D}. There are altogether 267 non-isomorphic algebras, out of those $149$ are single ones and $118$ are families.  Among them, there are no metric ones. With our deformation method we did find two new indecomposable nilpotent metric Leibniz (non-Lie) algebras which were not listed in \cite{D}. A solvable and non-Lie algebra example of Leibniz algebras in dimension $5$ must have $3$- or $4$-dimensional nilradical. They are partially listed in several papers, see \cite {CLOK1, CLOK2, KhSh, KhRH}. Altogether they give the entire classification. Those lists of solvable metric Leibniz algebras, - except the  decomposable ones - do not give new metric Leibniz algebras. 

The structure of the paper is as follows. In the first section we briefly recall the basic definitions and known results on quadratic Leibniz algebras and deformations. In Section $2$, we describe all nonequivalent metric Leibniz deformations of $2$- and $3$-dimensional Leibniz algebras. Section $3$ deals with the $4$-dimensional cases. Here we have a complete list of metric Leibniz  deformations of Leibniz (and Lie) algebras. The last Section gives metric Leibniz algebras in dimension $5$, and describes all their metric deformations.

\bigskip

{\bf Acknowledgement:} This work was initiated  during a visit of Ashis Mandal to  E\"otv\"os Lor\'and University, Hungary and has been finalized at the Max Planck Institute at Bonn, Germany. The authors acknowledge the generous support and  very active working environment  of these two institutes.  The second author  would like to thank for the support from MATRICS -Research grant  MTR/2018/000510 by SERB, DST, Government of India.
\section{Preliminaries}
\subsection{Leibniz algebras}
\dfn
Let $\mathfrak{L}$ be a vector space with a bilinear product denoted by the bracket  $[-,-]: \mathfrak{L} \times \mathfrak{L} \to \mathfrak{L}$, which satisfies ( the left Leibniz identity )
$$
[x,[y,z]]=[[x,y],z] + [y,[x,z]]
$$
for any $x,y,z \in \mathfrak{L}$. We call $(\mathfrak{L}, [-,-])$  a \emph{left Leibniz algebra}. If one has
$$
[x,[y,z]]=[[x,y],z]-[[x,z],y]
$$
for every $x,y,z \in \mathfrak{L}$, we call it a \emph{right Leibniz algebra}. If both equations are satisfied, we call the Leibniz algebra $\mathfrak{L}$ is \emph{symmetric}.

\rem
Obviously Lie algebras are special case of symmetric Leibniz algebras with the bilinear product being additionally skew-symmetric .

\dfn
For a given Leibniz algebra $\mathfrak{L}$ define
$$
\mathfrak{L}^1=\mathfrak{L}, ~~\mbox{and}~\quad \mathfrak{L}^{k+1}=[\mathfrak{L}^k, \mathfrak{L}]~~\mbox{for}~~ k \geq 1.
$$
We call $\mathfrak{L}$ is \emph{nilpotent} if there exists $n \in \N$ such that $\mathfrak{L}^n=0$.
\dfn
Define 
$$
\mathfrak{L}^{[1]}=\mathfrak{L}, ~~\mbox{and}~ \quad \mathfrak{L}^{[k+1]}=[\mathfrak{L}^{[k]}, \mathfrak{L}^{[k]}] ~~\mbox{for}~~ k \geq 1.
$$
We call $\mathfrak{L}$ is \emph{solvable} if there exists $m \in \N$ such that $\mathfrak{L}^m=0$.
\rem
The \emph{nilradical} of a Leibniz algebra is the maximal nilpotent ideal.
\rem
Nilpotent Leibniz algebras are obviously solvable.
\dfn
Let $(\mathfrak{L}, [-,-])$ be a Leibniz algebra (left or right). Denote the vector space spanned by the set $\{[x,x] \slash x \in \mathfrak{L}\}$ by $\mathfrak{I}_{\mathfrak{L}}$ and call it the \emph{Leibniz kernel} of $\mathfrak{L}$.

We say that a Leibniz algebra is \emph{simple} if $[\mathfrak{L}, \mathfrak{L}] \neq \mathfrak{I}_{\mathfrak{L}}$, and $\mathfrak{L}$ has no ideals beside ${0}, \mathfrak{L}$ and $\mathfrak{I}_{\mathfrak{L}}$.
We call $\mathfrak{L}$ \emph{semisimple}, if the maximal solvable ideal of $\mathfrak{L}$ is $\mathfrak{I}_{\mathfrak{L}}$.

\rem
By Levi's Theorem, every Leibniz algebra is a direct sum of its solvable radical (maximal solvable ideal) and a semisimple Leibniz algebra, see \cite{Ba}.

\dfn
Let $(\mathfrak{L},[-,-])$ be a left or right Leibniz algebra and  $B:\mathfrak{L} \times \mathfrak{L} \to \C$ a bilinear form on $\mathfrak{L}$. If
$$
B([x,y],z)=B(x,[y,z])
$$
for every $x,y,z \in \mathfrak{L}$, we call such a bilinear form an \emph{ invariant} form. If $\mathfrak{L}$ has a non-degenerate  and invariant bilinear form, we call $(\mathfrak{L}, B)$ a \emph{metric Leibniz algebra}.

\rem
Of course, for a metric Leibniz algebra there can be many choices of invariant forms.

\begin{prop} (see \cite{BH1})
Let $\mathfrak{L}$ be a Leibniz algebra. 
\begin{itemize}
\item[(i)] A metric Leibniz algebra $(\mathfrak{L},B)$ is necesserily symmetric. 

\item[(ii)] The Leibniz algebra $\mathfrak{L}$ is (semi)simple if and only if $\mathfrak{I}_{\mathfrak{L}}$ is a (semi)simple Lie algebra.

\end{itemize}
\end{prop}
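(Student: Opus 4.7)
For part (i), my plan is to derive the right Leibniz identity from the left one using only the invariance and non-degeneracy of $B$. I would fix $x,y,z\in\mathfrak{L}$ and test the target expression $[[x,y],z]-[[x,z],y]$ against an arbitrary $w$. Two applications of invariance $B([a,b],c)=B(a,[b,c])$ should transfer all brackets onto the second slot, producing
\[
B\bigl([[x,y],z]-[[x,z],y],\,w\bigr)=B\bigl(x,\,[y,[z,w]]-[z,[y,w]]\bigr).
\]
The left Leibniz identity, applied to $[y,[z,w]]$, collapses this inner expression to $[[y,z],w]$, and one final use of invariance converts the right-hand side to $B([x,[y,z]],w)$. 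Non-degeneracy of $B$ in the last slot then forces $[x,[y,z]]=[[x,y],z]-[[x,z],y]$, i.e.\ the right Leibniz identity, so $\mathfrak{L}$ is symmetric.

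For part (ii), my strategy is to reduce to the classical Lie-algebraic dichotomy by passing to the Lie quotient $\mathfrak{L}/\mathfrak{I}_{\mathfrak{L}}$ and invoking Levi's theorem for Leibniz algebras (cited as \cite{Ba} in the excerpt). First I would check that $\mathfrak{I}_{\mathfrak{L}}$ is an abelian ideal by using the left Leibniz identity with $y=x$ to see that $[[x,x],z]=0$, and a brief expansion of $[x+z,x+z]$ to see that $[z,[x,x]]\in\mathfrak{I}_{\mathfrak{L}}$. Thus $\mathfrak{I}_{\mathfrak{L}}$ is solvable and must sit inside the radical; in the (semi)simple case, the hypothesis forces equality. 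Levi's theorem then supplies a (semi)simple Lie complement naturally identified with $\mathfrak{L}/\mathfrak{I}_{\mathfrak{L}}$, giving the Lie algebra referenced in the statement. For the converse, I would use the correspondence between ideals of $\mathfrak{L}$ containing $\mathfrak{I}_{\mathfrak{L}}$ and ideals of the quotient to translate Lie-theoretic (semi)simplicity back to the paper's definition; the restriction $[\mathfrak{L},\mathfrak{L}]\neq\mathfrak{I}_{\mathfrak{L}}$ in the simple case is what rules out the degenerate one-dimensional abelian possibility.

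The main obstacle I anticipate is part (ii): correctly matching the Leibniz-algebraic notions of simple and semisimple with their Lie-theoretic counterparts through the quotient, and invoking Levi's theorem in the Leibniz setting with appropriate care. Part (i), by contrast, is essentially a bookkeeping exercise in invariance once one commits to testing the target equation against an arbitrary vector $w$ and reading off the conclusion from non-degeneracy.
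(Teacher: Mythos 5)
The paper offers no proof of this proposition at all --- it is quoted from \cite{BH1} with a citation --- so there is nothing internal to compare your argument against; I can only judge it on its own terms. Part (i) is correct and complete: two applications of invariance push the outer brackets of $[[x,y],z]-[[x,z],y]$ into the second slot of $B$, the left Leibniz identity collapses $[y,[z,w]]-[z,[y,w]]$ to $[[y,z],w]$, one further application of invariance gives $B([x,[y,z]],w)$, and non-degeneracy (in either argument, as $\mathfrak{L}$ is finite-dimensional) forces the right Leibniz identity. The only omission is the mirror computation when $\mathfrak{L}$ is presented as a right Leibniz algebra, which the paper's definition of a metric Leibniz algebra permits; the argument is identical. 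You are also right that the statement of (ii) must be read as concerning the quotient $\mathfrak{L}/\mathfrak{I}_{\mathfrak{L}}$: your own identity $[[x,x],z]=0$ shows the Leibniz kernel is abelian, hence never a nonzero (semi)simple Lie algebra, so the printed ``$\mathfrak{I}_{\mathfrak{L}}$'' is surely a typo.

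The genuine gap is in the simple half of (ii). Your semisimple argument is fine (and Levi's theorem is a detour: since $\mathfrak{I}_{\mathfrak{L}}$ is a solvable ideal, an ideal $J\supseteq\mathfrak{I}_{\mathfrak{L}}$ is solvable iff $J/\mathfrak{I}_{\mathfrak{L}}$ is, so the radical of $\mathfrak{L}$ equals $\mathfrak{I}_{\mathfrak{L}}$ exactly when $\mathfrak{L}/\mathfrak{I}_{\mathfrak{L}}$ has zero radical). But the ideal correspondence you invoke for simplicity only sees ideals of $\mathfrak{L}$ that \emph{contain} $\mathfrak{I}_{\mathfrak{L}}$. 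Because $[\mathfrak{I}_{\mathfrak{L}},\mathfrak{L}]=0$, every subspace $V\subseteq\mathfrak{I}_{\mathfrak{L}}$ with $[\mathfrak{L},V]\subseteq V$ is automatically a two-sided ideal, and such $V$ need not be $0$ or $\mathfrak{I}_{\mathfrak{L}}$. Concretely, take a simple Lie algebra $\mathfrak{g}$, a reducible module $M=V\oplus W$ with $V,W$ nontrivial irreducibles, and the left Leibniz bracket $[(x,m),(y,n)]=([x,y],\,x\cdot n)$ on $\mathfrak{g}\oplus M$: here $\mathfrak{I}_{\mathfrak{L}}=M$, the quotient is the simple Lie algebra $\mathfrak{g}$, and $[\mathfrak{L},\mathfrak{L}]=\mathfrak{L}\neq\mathfrak{I}_{\mathfrak{L}}$, yet $V$ is a proper ideal strictly between $0$ and $\mathfrak{I}_{\mathfrak{L}}$, so $\mathfrak{L}$ is not simple in the paper's sense. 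Hence ``$\mathfrak{L}/\mathfrak{I}_{\mathfrak{L}}$ simple $\Rightarrow$ $\mathfrak{L}$ simple'' requires the additional input that $\mathfrak{I}_{\mathfrak{L}}$ is an irreducible (or zero) $\mathfrak{L}$-module, which your sketch does not supply and which does not follow from the quotient alone.
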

\subsection {Leibniz cohomology and deformations}
Let  $\mathfrak{L}$ be a Leibniz algebra (right Leibniz algebra) and $M$ be a representation of $\mathfrak{L}$. By definition (\cite {LP}), $M$ is a $\mathbb{C}$-module equipped with two actions (left and right) of $\mathfrak{L}$, 
$$[-,-]: \mathfrak{L} \times M\longrightarrow M~~\mbox{and}~[-,-]:M \times \mathfrak{L} \longrightarrow M ~~\mbox{such that the expression}~$$ 
$$[x,[y,z]]=[[x,y],z]-[[x,z],y]$$
holds, whenever one of the variables is from $M$ and the others from $\mathfrak{L}$.

 In particular, $\mathfrak{L}$ is a representation of itself with the obvious action given by the Leibniz bracket in $\mathfrak{L}$, which is the adjoint representation of $\mathfrak{L}$.

\dfn
\cite{FMM} The space of \emph{Leibniz $n$-cochains} $CL^{*}(\mathfrak{L}, M)$ of a Leibniz algebra $\mathfrak{L}$ with coefficients in an $\mathfrak{L}$- module $M$ is given by 
 $CL^n({\mathfrak{L}}; {M}):= \mbox{Hom} _\mathbb{C}({\mathfrak{L}}^{\otimes n}, {M})$, $n\geq 0.$ Let 
$\delta^n : CL^n({\mathfrak{L}}; {M})\longrightarrow CL^{n+1}(\mathfrak{L}; M)$
be a $\mathbb{C}$-homomorphism defined by 
\begin{equation*}
\begin{split}
&\delta^nf(x_1,\cdots,x_{n+1})\\
&:= [x_1,f(x_2,\cdots,x_{n+1})] + \sum_{i=2}^{n+1}(-1)^i[f(x_1,\cdots,\hat{x}_i,\cdots,x_{n+1}),x_i]\\
&+ \sum_{1\leq i<j\leq n+1}(-1)^{j+1}f(x_1,\cdots,x_{i-1},[x_i,x_j],x_{i+1},\cdots,\hat{x}_j,\cdots, x_{n+1}).
\end{split}
\end{equation*}
Then $(CL^*(\mathfrak{L}; M),\delta)$ is a cochain complex, whose \emph{cohomology} is called the cohomology of the Leibniz algebra $\mathfrak{L}$ with coefficients in the module $M$. The $n$-th cohomology is denoted by $HL^n(\mathfrak{L}; M)$. In particular, the  $n$-th cohomology of $\mathfrak{L}$ with coefficients in itself is denoted by $HL^n(\mathfrak{L}; \mathfrak{L}).$

For studying deformations, we consider this special case $M=\mathfrak{L}$ with the module structure given by bracket $[-,-]$ of $\mathfrak{L}$.
\begin{dfn}
 A \emph{formal $1$-parameter family of deformations} of a Leibniz algebra $\mathfrak{L}$ (see \cite{FMM, FM}) is a Leibniz bracket $\mu_t$ on the $\mathbb{C}[[t]]$-module $\mathfrak{L}_t= \mathfrak{L}{\otimes}_{\mathbb C} \mathbb{C}[[t]]$ such that it is a formal power series expressed as 
 $$
\mu_t( -,- )=\mu_0(-, -) + t\phi_1(-,-) + t^2\phi_2(-,-) + \cdots ,
$$
where $\mu_0$ is the original Leibniz bracket, and all $\phi_i \in CL^2(\mathfrak{L}; \mathfrak{L})$ are $2$-cochains. 
  \end{dfn}
\begin{dfn}
A $1$-parameter deformation is called a \emph{jump deformation}, if for any  nonzero values $s$ and $u$ of the parameter $t$, the algebras $[,]_s$ and $[,]_u$ are isomorphic, but not equal to the original one with the value of the parameter being equal to $0$.
\end{dfn}

Thus  $\mu_t$, as given above, is a deformation of $\mathfrak{L}$ provided the following equalities hold for each $n \in \N$:
$$
\mu_t(x,\mu_t(y,z))=~\mu_t(\mu_t(x,y),z)-\mu_t( \mu_t(x,z), y)~~\mbox{for}~ x,y,z \in \mathfrak{L}.
$$
Now expanding both sides  of the above equation and collecting coefficients of $t^n$, we see that it is equivalent to the system of equations
$$
\sum_{i+j=n}\phi_i(x,\phi_j(y,z))=~\sum_{i+j=n}\{\phi_i(\phi_j(x,y),z)-\phi_i( \phi_j(x,z), y)\}~~\mbox{for}~ x,y,z \in \mathfrak{L}.
$$
 \begin{rem}
 \begin{enumerate}
\item For $n=0$, the above equality is equivalent to the usual Leibniz identity of $\mu_0= [-,-]$.
\item  For $n=1$, it is equivalent to the condition $\delta \phi_1=0$. This shows that  $\phi_1 \in CL^2(\mathfrak{L}; \mathfrak{L})$ is a cocycle.
In general, for $n\geq 2$, $\phi_n $ is just a $2$-cochain. 
\end{enumerate}
\end{rem}
 \begin{dfn}
 \begin{itemize}
\item The $2$-cochain $\phi_1$ is called the \emph{infinitesimal part} of the deformation $\mu_t$. If the Leibniz identity expression for $\mu_t$ is valid up to order $1$, which means it is satisfied up to the $t$- term, we call the deformation $\mu_t$ an \emph{infinitesimal} deformation.
\item  If no higher order terms appear, then the infinitesimal deformation gives rise to a \emph{real deformation}. 
\item A deformation $\mu_t$ is \emph{of order $n$}, if the Leibniz identity expression for $\mu_t$ holds up to order $n$, but not for higher order terms.
\end{itemize}
\end{dfn}
 \begin{prop}
The 2-cochain $\phi_1$ in the deformation $\mu_t$ is a $2$-cocycle in $HL^2 (\mathfrak{L}; \mathfrak{L}) $. 
\end{prop}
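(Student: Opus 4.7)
The plan is to extract the coefficient of $t^1$ from the Leibniz identity for $\mu_t$ and match it term-by-term with the coboundary formula $\delta^2\phi_1$. Since the excerpt already derives the system
\[
\sum_{i+j=n}\phi_i(x,\phi_j(y,z))=\sum_{i+j=n}\bigl\{\phi_i(\phi_j(x,y),z)-\phi_i(\phi_j(x,z),y)\bigr\},
\]
all that remains is to examine the $n=1$ case carefully.

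First, I would specialize to $n=1$ and expand both sides. Writing $\mu_0=[-,-]$, this yields the identity
\[
[x,\phi_1(y,z)] + \phi_1(x,[y,z]) = [\phi_1(x,y),z] + \phi_1([x,y],z) - [\phi_1(x,z),y] - \phi_1([x,z],y),
\]
for all $x,y,z\in\mathfrak{L}$. Rearranging everything to one side gives
\[
[x,\phi_1(y,z)] - [\phi_1(x,y),z] + [\phi_1(x,z),y] + \phi_1(x,[y,z]) - \phi_1([x,y],z) + \phi_1([x,z],y) = 0.
\]

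Second, I would apply the general coboundary formula from the definition of $\delta^n$ in the case $n=2$ with $f=\phi_1$, and relabel $(x_1,x_2,x_3)=(x,y,z)$. The first outer-bracket term contributes $[x_1,\phi_1(x_2,x_3)]$; the sum $\sum_{i=2}^{3}(-1)^i[\phi_1(x_1,\ldots,\hat x_i,\ldots),x_i]$ gives $[\phi_1(x_1,x_3),x_2]-[\phi_1(x_1,x_2),x_3]$; and the sum over $1\le i<j\le 3$ with sign $(-1)^{j+1}$ contributes $-\phi_1([x_1,x_2],x_3)+\phi_1([x_1,x_3],x_2)+\phi_1(x_1,[x_2,x_3])$. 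Collecting these gives precisely the left-hand side of the identity obtained from the $n=1$ deformation equation. Therefore $\delta^2\phi_1=0$, i.e.\ $\phi_1$ is a $2$-cocycle, and its cohomology class lies in $HL^2(\mathfrak{L};\mathfrak{L})$.

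The argument is essentially bookkeeping, and there is no real obstacle beyond getting the signs and the position of hatted entries right; the only subtle point is making sure the sign conventions in $\delta^n$ (as stated in the excerpt) exactly match the signs produced by the right-Leibniz identity on the $t^1$-coefficient. If desired, I would also remark that the same computation applied at order $t^n$ (for $n\ge 2$) shows that $\delta^2\phi_n$ is an explicit expression in the lower-order $\phi_i$'s, which is the standard obstruction to extending an infinitesimal deformation to a formal one, but this is not needed for the proposition itself.
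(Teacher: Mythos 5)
Your proposal is correct and follows exactly the route the paper intends: the paper states this proposition without a separate proof, having already noted in the preceding Remark that the $n=1$ coefficient of the deformation equation is equivalent to $\delta\phi_1=0$. Your explicit term-by-term matching of the $t^1$-coefficient with the $n=2$ coboundary formula (signs and hatted positions included) checks out and simply fills in the bookkeeping the paper leaves implicit.
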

\begin{dfn}
Let $\mu_t$ and $\tilde{\mu}_t$ be two formal 1-parameter deformations of $\mathfrak{L}$. A formal isomorphism between the deformations $\mu_t$ and $\tilde{\mu}_t$ of a  Leibniz algebra $\mathfrak{L}$ is a power series $\Phi_t=\sum_{i\geq 0} \phi_i t^i$, where each $\phi_i: \mathfrak{L}\longrightarrow \mathfrak{L}$ is a $\mathbb{C}$-linear map with $\phi_0= id_{\mathfrak{L}}$ (the identity map on $\mathfrak{L}$), such that $$\tilde{\mu}_t(x,y)=~\Phi_t \circ \mu_t({{\Phi_t}^{-1}}(x),~{{\Phi_t}^{-1}}(y))~~\mbox{for}~ x,y \in \mathfrak{L}.$$

Then we say that $\mu_t$ and $\tilde{\mu}_t$ are \emph{equivalent}.
\end{dfn}
\begin{rem}
  \begin{enumerate}
\item Any deformation of $\mathfrak{L}$ that is equivalent to the original Leibniz structure $\mu_0$ is said to be a trivial  deformation.
\item The cohomology class of an infinitesimal  deformation $\mu_t$ of $\mathfrak{L}$ is determined by the equivalence class of the infinitesimal part $\phi_1$ of the deformation.
\item A Leibniz algebra $\mathfrak{L}$  is said to be \emph{rigid} if and only if every deformation of $\mathfrak{L}$  is trivial.
\item If the second cohomology space $HL^2( \mathfrak{L}; \mathfrak{L})=0$, then the Leibniz algebra $\mathfrak{L} $ is formally rigid.
 \end{enumerate}
\end{rem}
 \section{Metric Leibniz algebras in dimension $2$ and $3$}
\medskip
We do not consider the trivial case of commutative Leibniz algebras.
\smallskip

{\bf{1.}} In {\bf{dimension $2$}}, there are three non-isomorphic Leibniz algebras (see \cite{Ben, BH1}):
\begin{itemize}
\item $r_1: [e_1,e_2]=e_2, [e_2,e_1]=-e_2$;
\item $\mu_1:  [e_1,e_1]=e_2$;
\item $\mu_2:  [e_1,e_1]=e_2, [e_2,e_1]=e_2$.
\end{itemize}

Among those, only $\mu_1$ is a metric Leibniz algebra.  An invariant inner product can be given by the matrix
$$
B_{\mu_1}=\begin{pmatrix} 0&1\\
1&0
\end{pmatrix}.
$$

\begin{prop}
The Leibniz algebra $\mu_1$ has 2 non-isomorphic Leibniz 2-cocycles, and none of them define a metric Leibniz algebra with infinitesimal part being the given cocycle, nor does their linear combination.
\end{prop}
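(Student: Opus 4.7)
The plan is to first determine the space of Leibniz $2$-cocycles of $\mu_1$, pick out the two claimed representative cocycles, and then verify via direct invariance computations that the deformed brackets coming from these cocycles (and from any linear combination) cannot carry a non-degenerate invariant symmetric bilinear form, except in the trivial case where the infinitesimal part is already a coboundary.

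Step one is elementary linear algebra. Parametrize a $2$-cochain as $\phi(e_i,e_j) = A_{ij}e_1 + B_{ij}e_2$ and substitute into $\delta^2\phi$ on each of the eight triples $(e_i,e_j,e_k)$. Since $[e_1,e_1]=e_2$ is the only nonzero bracket of $\mu_1$, most terms drop out; the triples $(e_1,e_1,e_1)$, $(e_1,e_1,e_2)$ and $(e_1,e_2,e_1)$ yield a short linear system that forces $A_{12}=A_{21}=A_{22}=B_{22}=0$ and $B_{12}=-A_{11}$. The cocycle space is therefore three-dimensional in $(A_{11},B_{11},B_{21})$, and computing coboundaries $\delta^1\psi$ from $1$-cochains produces a two-dimensional image inside it. Within the cocycle space the two natural distinguished representatives are
\[
\phi_1:\ \phi_1(e_1,e_1)=e_1,\ \phi_1(e_1,e_2)=-e_2,\qquad \phi_2:\ \phi_2(e_2,e_1)=e_2.
\]

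Step two is the metric test. For each of $\phi_1$, $\phi_2$ and a generic combination $a\phi_1+b\phi_2$, I would write $\mu_t=\mu_0+t\phi$ and look for a symmetric $B=(b_{ij})$ satisfying the invariance identity $B(\mu_t(x,y),z)=B(x,\mu_t(y,z))$ on every basis triple. The triple $(e_1,e_2,e_1)$ produces the decisive equation $(a+b)\,t\,b_{12}=0$, forcing $b_{12}=0$ as soon as $a+b\neq 0$; the triple $(e_1,e_1,e_2)$ then expresses $b_{22}$ as a $t$-multiple of $b_{12}$, so $b_{22}=0$ as well. The form $B$ is then of rank at most one and hence degenerate. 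Setting $(a,b)=(1,0)$ or $(0,1)$ handles $\phi_1$ and $\phi_2$ individually. In the residual sub-case $a+b=0$, the cocycle $a(\phi_1-\phi_2)$ lies in the coboundary subspace, so the associated deformation is equivalent to the original $\mu_0$ and produces no genuinely new metric Leibniz algebra.

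The main obstacle is organizational rather than conceptual: one has to push through enough of the invariance equations to notice that $(e_1,e_2,e_1)$ is the controlling triple, and handle the sub-case $a+b=0$ cleanly by recognizing it as a trivial equivalence via the coboundary identification rather than as a fresh metric structure.
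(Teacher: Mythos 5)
Your handling of the metric non-existence claim is correct, but you take a genuinely different route from the paper. The paper's proof is a one-liner: it lists the two cocycles and observes that the resulting deformations are (isomorphic to) the algebra $\mu_2$ ($[e_1,e_1]=e_2$, $[e_2,e_1]=e_2$), which was already identified in the classification at the start of the section as non-metric. You instead solve the invariance equations directly on the deformed bracket: the triple $(e_1,e_2,e_1)$ forces $b_{12}=0$ away from the degenerate parameter locus, the triple $(e_1,e_1,e_2)$ then kills $b_{22}$, and the form has rank at most one. This buys self-containedness (no appeal to the $2$-dimensional classification or to the prior claim that $\mu_2$ is not metric) at the cost of more bookkeeping; your determination of the cocycle space ($A_{12}=A_{21}=A_{22}=B_{22}=0$, $B_{12}=-A_{11}$, hence three parameters $(A_{11},B_{11},B_{21})$ with a two-dimensional coboundary subspace) is consistent with the differential as defined in the paper.

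There is, however, a concrete weakness in your treatment of the first clause of the proposition. Your two ``distinguished representatives'' are cohomologous: in the coordinates $(A_{11},B_{11},B_{21})$ you have $\phi_1-\phi_2=(1,0,-1)$, which lies in the coboundary subspace you computed (it is $\delta^1\psi$ for $\psi(e_2)=-e_1$). So your computation actually exhibits a \emph{one}-dimensional $HL^2$ and does not substantiate the claim that there are two non-isomorphic $2$-cocycles; the paper's pair $\phi(e_1,e_2)=e_2$ and $\phi(e_2,e_1)=e_2$ is selected by the different criterion of which first-order brackets are honest Leibniz structures, not as a basis of cohomology. Relatedly, in the residual case $a+b=0$ the bracket $\mu_0+ta(\phi_1-\phi_2)$ is not literally ``equivalent to $\mu_0$'': for $a\neq 0$ it is not a Leibniz bracket at all, since the order-$t^2$ term of the Leibniz identity on $(e_1,e_1,e_1)$ does not vanish; what is true is only that the \emph{infinitesimal} deformation is trivial because its infinitesimal part is a coboundary. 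Neither point damages your negative conclusion about metric deformations, but the cocycle-counting clause of the statement remains unproved as you have written it.
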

\begin{proof} 
The two Leibniz $2$-cocycles  which define Leibniz structures are:
\begin{itemize}
\item $\phi_1(e_1,e_2)=e_2$
\item  $\phi_2(e_2,e_1)=e_2$
\end{itemize}
 They both give deformations to the Leibniz algebra $\mu_2$ which is not metric.
\end{proof}
\medskip

{\bf{2.}} In {\bf{dimension $3$}}, we have one metric Lie algebra, the simple Lie algebra
$$
 sl(2,\C): [e_1,e_2]=e_3, [e_2,e_3]=2e_2, [e_1,e_3]=-2e_1
 $$
 equipped with an invariant inner product
 $$
 B_{sl(2)}=\begin{pmatrix}0&1&0\\
 1&0&0\\
 0&0&2
 \end{pmatrix},
 $$
 and $9$ non-isomorphic non-Lie complex Leibniz algebras, $3$ of them being $1$-parameter families (see \cite{RR, CK}).
Among those the simple Lie algebra $sl(2,\C)$ is rigid, so it has no Leibniz deformations either (see \cite{LP}).

From the Leibniz ones the only metric Leibniz algebra is the nilpotent Leibniz algebra
$$
\lambda_2: [e_1,e_1]=e_3
$$
which is the direct sum $\mu_1\oplus\C$. A possible invariant inner product is
$$
B_{\lambda_2}=\begin{pmatrix}0&0&1\\0&1&0\\
1&0&0
\end{pmatrix}.
$$
\begin{prop}
The Leibniz algebra $\lambda_2$ has no metric Leibniz cocycles, and it does not deform to metric Leibniz algebra.
\end{prop}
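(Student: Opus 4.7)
The plan is to compute $HL^2(\lambda_2;\lambda_2)$ explicitly, determine which cohomology classes (and which linear combinations) extend to genuine deformations $\mu_t$, and for each such $\mu_t$ test whether the deformed bracket admits a non-degenerate symmetric invariant form. Because $\lambda_2=\mu_1\oplus\C$ is nilpotent with only one non-vanishing bracket $[e_1,e_1]=e_3$, the cocycle equation collapses to a very small linear system: a generic $2$-cochain $\phi(e_i,e_j)=\sum_{k}c_{ij}^{k}e_k$ has $27$ unknowns, but the only non-trivial terms of $\delta^2\phi(e_i,e_j,e_k)$ come from triples that interact with the single product $e_3=[e_1,e_1]$, so the relations on the $c_{ij}^k$ are few and can be read off by hand.

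First I would write out $\delta^2\phi=0$ on the basis triples and collect the resulting constraints, then mod out by the image of $\delta^1$ acting on $\hom(\lambda_2,\lambda_2)$ to obtain a short list of representatives of $HL^2(\lambda_2;\lambda_2)$. For each representative $\phi$ (and more generally for each linear combination of representatives) I would form $\mu_t=\mu_0+t\phi$ and test the full Leibniz identity, discarding or correcting the branches that obstruct at higher order. Since $\lambda_2$ is only $3$-dimensional, an alternative is to identify each candidate deformation with an entry in the known classification of $3$-dimensional complex Leibniz algebras from \cite{RR,CK}, which cuts the analysis down to a finite table-lookup.

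Once the non-trivial deformations $\mu_t$ are listed, the metric test is a direct linear-algebra exercise: impose $B(\mu_t(x,y),z)=B(x,\mu_t(y,z))$ for an unknown symmetric $3\times 3$ matrix $B$ and show that every solution forces $\det B=0$. A quicker argument available here is to invoke the classification already visible in the preceding discussion of dimension $3$: the only metric Leibniz algebras in that dimension are $sl(2,\C)$ and $\lambda_2$ itself. Since $\lambda_2$ is nilpotent, no deformation of it can be isomorphic to the simple Lie algebra $sl(2,\C)$, so the only metric algebra reachable from $\lambda_2$ is the trivial deformation, yielding both assertions.

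The main obstacle will be the bookkeeping of equivalence classes: one must ensure that every cohomology class is tested, including arbitrary linear combinations, and that no higher-order deformation $\mu_t=\mu_0+t\phi_1+t^2\phi_2+\cdots$ with an obstructed linear part produces a new algebra that is overlooked. Because the invariance equations for $B$ couple all nine matrix entries with the coefficients of $\mu_t$, confirming degeneracy in each branch needs a case-by-case check rather than a single uniform computation; this is the step most likely to require explicit numerics.
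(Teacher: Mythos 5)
Your primary computation is exactly the route the paper takes: it lists the $8$ inequivalent $2$-cocycles of $\lambda_2$ explicitly and rules each one (and their combinations) out case by case. One labor-saving device the paper uses that you might adopt: since a metric Leibniz algebra is necessarily \emph{symmetric} (Proposition 2.1(i)), it usually suffices to check that the deformed bracket fails to be simultaneously a left and a right Leibniz algebra, which is much quicker than solving the full invariance system $B(\mu_t(x,y),z)=B(x,\mu_t(y,z))$ for an unknown symmetric $B$ and verifying $\det B=0$ in every branch.

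Your proposed shortcut, however, rests on a false principle. It is not true that a nilpotent algebra cannot deform to a simple one: the $3$-dimensional Heisenberg Lie algebra deforms to $\sl(2,\C)$ (rescale $[h,e]=2te$, $[h,f]=-2tf$, $[e,f]=h$), and indeed this paper itself exhibits the nilpotent $W_3$ deforming to $\sl(2,\C)\oplus\C^2$ and $\mathfrak{R}_{20}(0)$ (nilpotent) deforming to $\mu_1\oplus\mu_1$. So ``$\lambda_2$ is nilpotent, hence cannot reach $\sl(2,\C)$'' does not stand. The classification-based argument can still be salvaged, but the correct obstruction is the Leibniz kernel rather than nilpotency: for any deformation $\mu_t$ of $\lambda_2$ one has $\mu_t(e_1,e_1)=e_3+t\phi(e_1,e_1)+\cdots\neq 0$ for generic $t$, so $\mathfrak{I}_{\mu_t}\neq 0$ and the deformed algebra is never a Lie algebra; since by the dimension-$3$ classification the only non-Lie metric Leibniz algebra is $\lambda_2$ itself (and the abelian algebra is likewise unreachable), no new metric algebra can appear. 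If you repair the shortcut this way it does give both assertions; otherwise you must fall back on the explicit cocycle-by-cocycle check, which is what the paper does.
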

\begin{proof}
This Leibniz algebra has 8 non-equivalent Leibniz 2-cocycles, and none of them, or their linear combination, gives a metric Leibniz algebra. We list the 2-cocycles:
\begin{itemize}
\item $\phi_1(e_1,e_2)=e_1,  \phi_1(e_2,e_1)=e_2,  \phi_1(e_3,e_2)=2e_3$;
\item $\phi_2(e_1,e_2)=e_2$;
\item $\phi_3(e_2,e_1)= e_1-e_2$;
\item $\phi_4(e_2,e_1)=e_3$;
\item $\phi_5(e_2,e_2)=e_2$;
\item $\phi_6(e_2,e_2)=e_3$;
\item $\phi_7(e_3,e_1)=e_2$;
\item $\phi_8(e_3,e_1)=e_3$.
\end{itemize}
In checking the metric property for the deformed Leibniz bracket, it is often easier to check that the obtained Leibniz algebra is not symmetric. For example, $\phi_1$ and $\phi_2$ do not define right Leibniz algebras, $\phi_3$ does not give a left Leibniz algebra, while $\phi_5$, although being a metric cocycle, does not give a Leibniz algebra at all.
\end{proof}

\begin{prop}\label{only at the inf level}
For a Leibniz algebra, the 2-cocycle $\phi$ of the form $\phi(e_i, e_i)=e_i$ never defines a metric Leibniz algebra with formal $1$-parameter local base, only at the infinitesimal level.
\begin{proof}
Consider the Leibniz bracket $[-,-]_t$ defined by the cocycle $\phi$. It has to satisfy the Leibniz identity
$$
[e_i,[e_i,e_i]_t]_t = [[e_i,e_i]_t,e_i]_t - [[e_i,e_i]_t,e_i]_t 
$$
which is obviously not satisfied, as on the left hand side there is a $t^2e_i$ term, which is only zero for $t=0$. If the infinitesimal deformation is metric, with some change of the 2-cochain we might be able to eliminate the $t^2$ term, but this higher order deformation will not be metric anymore.
\end{proof}
\end{prop}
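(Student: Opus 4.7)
The plan is to localize the Leibniz identity on the single triple $(e_i,e_i,e_i)$, extract the lowest-order obstruction it produces, and then argue that no metric correction can absorb it. By the preceding proposition, any metric Leibniz algebra is symmetric, so a formal metric deformation $[-,-]_t = [-,-]_0 + t\phi + t^2\phi_2 + \cdots$ must in particular satisfy the right Leibniz identity
\begin{equation*}
[x,[y,z]_t]_t = [[x,y]_t,z]_t - [[x,z]_t,y]_t.
\end{equation*}
Substituting $x=y=z=e_i$ makes the right-hand side identically zero, so the left-hand side must vanish as a formal power series in $t$ if $[-,-]_t$ is to be a genuine deformation rather than an infinitesimal one.

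First I would expand the left-hand side and record the coefficient of $t^2$. The crucial contribution is the self-composition $\phi(e_i,\phi(e_i,e_i)) = \phi(e_i,e_i) = e_i$, together with the mixing terms $[e_i,\phi_2(e_i,e_i)]_0$ and $\phi_2(e_i,[e_i,e_i]_0)$. Requiring the sum to vanish yields the necessary condition
\begin{equation*}
[e_i,\phi_2(e_i,e_i)]_0 + \phi_2(e_i,[e_i,e_i]_0) = -e_i.
\end{equation*}
This already shows that the truncated bracket $[-,-]_0 + t\phi$ is never Leibniz on its own, so the deformation genuinely breaks down unless a suitable $\phi_2$ is produced — confirming that, before invoking any metric constraint, the cocycle $\phi$ can at best define an \emph{infinitesimal} deformation.

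The final step is to rule out a metric $\phi_2$ of the required form. Imposing $B(\phi_2(x,y),z) = B(x,\phi_2(y,z))$ and pairing the obstruction equation with $e_i$ via $B$, I would use invariance of $B$ with respect to $[-,-]_0$ to rewrite the two left-hand terms as $B$-symmetric expressions that cancel, collapsing the relation to $0 = -B(e_i,e_i)$ and contradicting non-degeneracy of $B$ on the subspace carrying $e_i$. The main obstacle I expect is making this cancellation watertight in full generality: the balance depends on how $\ad_{e_i}$ interacts with $B$-invariance, and the cases $[e_i,e_i]_0 = 0$ and $[e_i,e_i]_0 \neq 0$ behave slightly differently. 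For the low-dimensional algebras addressed in this paper, however, each case reduces to a short linear-algebra check, matching the informal spirit of the original statement.
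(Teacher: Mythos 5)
Your first step coincides with the paper's argument: restricting the right Leibniz identity to the triple $(e_i,e_i,e_i)$ makes the right-hand side vanish identically, while the left-hand side acquires the term $\phi(e_i,\phi(e_i,e_i))=e_i$ at order $t^2$, so the truncated bracket $[-,-]_0+t\phi$ is never Leibniz and any corrector $\phi_2$ must satisfy
\begin{equation*}
[e_i,\phi_2(e_i,e_i)]_0+\phi_2(e_i,[e_i,e_i]_0)=-e_i .
\end{equation*}
Up to here you are correct, and in fact slightly more explicit than the paper, which only exhibits the $t^2e_i$ obstruction.

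The genuine gap is in your final step, which is meant to rule out a metric $\phi_2$. First, the claimed cancellation is not established: invariance of $B$ for $[-,-]_0$ turns $B([e_i,\phi_2(e_i,e_i)]_0,e_i)$ into $B(e_i,[\phi_2(e_i,e_i),e_i]_0)$, and invariance of $B$ for $\phi_2$ turns $B(\phi_2(e_i,[e_i,e_i]_0),e_i)$ into $B(e_i,\phi_2([e_i,e_i]_0,e_i))$; these two quantities have no reason to cancel (when $[e_i,e_i]_0=0$, as in all the relevant examples, the second vanishes while the first need not). Second, even granting the cancellation, the conclusion $B(e_i,e_i)=0$ is not a contradiction: over $\C$ every non-degenerate symmetric bilinear form on a space of dimension at least $2$ has isotropic vectors, and in the paper's own inner products (e.g.\ $B_{\mu_1}$, $B_{\mathfrak{d}}$) most basis vectors are isotropic; non-degeneracy would only be violated if you showed $B(e_i,z)=0$ for \emph{all} $z$, which pairing against the single vector $e_i$ cannot achieve. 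Third, the invariant form on the deformed algebra need not be the original $B$ (it could be any $B_t=B+tB_1+\cdots$), so imposing $B$-invariance of $\phi_2$ with respect to the undeformed form is not the correct formalization of the deformation being metric. To be fair, the paper itself only asserts without proof that eliminating the $t^2$ term destroys the metric property; your attempt to make that step rigorous is the right instinct, but as written it does not close the argument, and for the low-dimensional cases actually used it must be replaced by the case-by-case verification the paper implicitly relies on.
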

\begin{rem}
From now on, we will leave those Leibniz cocycles out.
\end{rem}

\section {Metric Leibniz algebras in dimension $4$}
The classification of 4-dimensional metric Lie algebras is known (see \cite{Ben, FP}). They are
\begin{itemize}
\item The Lie algebra $\sl(2,\C)\oplus \C $;
\item The diamond Lie algebra: $
\mathfrak{d}:[e_1,e_2]=e_3, [e_1,e_3]=-e_2, [e_2,e_3]=e_4$.
\end{itemize}
An invariant inner product for $\mathfrak{d}$ is given by
 $$
B_{\mathfrak{d}}=\begin{pmatrix}0&0&0&1\\
0&1&0&0\\
0&0&1&0\\
1&0&0&0
\end{pmatrix}.
$$ 
 Non-Lie Leibniz algebras in dimension $4$ are classified in many papers, see e.g. \cite{AOR} for nilpotent ones, and for solvable but not nilpotent ones see \cite{CK}. We use the notations in \cite{CK} and \cite{AOR}. Among the $18$ nilpotent single objects and $3$  one-parameter nilpotent families there is only one metric Leibniz algebra:
 $$
 \mathfrak{R}_{20}(0): [e_1,e_2]=e_4,\  [e_2,e_1]=e_4, [e_2,e_2]=e_3
 $$
 (this is the $0$ member of a 1-parameter family). It has
 a simple invariant inner product
 $$
 B_{\mathfrak{R}_{20}(0)}=\begin{pmatrix}0&0&1&0\\
 0&0&0&1\\
 1&0&0&0\\
 0&1&0&0
 \end{pmatrix}.
 $$
 Naturally, solvable, but not nilpotent ones are much more, but there is only one metric object among them:
 $$
 \begin{aligned}
 \mathfrak{L}_2: [e_1,e_2]=&e_3=-[e_2,e_1],\\
  [e_1,e_4]=&e_1=-[e_4,e_1],\\
   [e_4,e_2]=&e_2=-[e_2,e_4],\\
    [e_4,e_4]=&e_3
   \end{aligned}
 $$
 with a possible invariant inner product 
  $$
 B_{\mathfrak{L}_2}=\begin{pmatrix} 0&1&0&0\\
 1&0&0&0\\
 0&0&0&-1\\
 0&0&-1&0
 \end{pmatrix}.
 $$
 These are the non-decomposable cases. We also have the decomposable ones $\lambda_2\oplus\C$, $\mu_1 \oplus\mu_1$ and  $\sl(2)\oplus\C$.
 
  \begin{prop}
 The Lie algebra $\sl(2)\oplus\C$ has no metric Leibniz deformation.
 \end{prop}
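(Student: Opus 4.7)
Write $\g=\sl(2)\oplus\C$ with the standard basis $e_1,e_2,e_3$ for $\sl(2)$ and $e_4$ spanning the center. The plan is to classify the Leibniz $2$-cocycles of $\g$ up to equivalence and then to test the metric condition on every resulting honest deformation. The backbone of the argument is the rigidity of $\sl(2)$, already recalled in the excerpt: any deformation of $\g$ is, after an equivalence, concentrated on brackets that involve $e_4$ in at least one slot.

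First, I would exploit the $\sl(2)$-module structure. Using the adjoint action of $\sl(2)$ on $CL^2(\g;\g)$ and Weyl complete reducibility, every $\sl(2)$-equivariant $2$-cochain is (up to coboundary) a sum of components supported on the four blocks $\sl(2)\otimes\sl(2)$, $\sl(2)\otimes\C$, $\C\otimes\sl(2)$ and $\C\otimes\C$, with values in either $\sl(2)$ or $\C$. Since $HL^2(\sl(2);\sl(2))=0$ (rigidity), the $\sl(2)\otimes\sl(2)\to\sl(2)$ block contributes nothing at the cohomology level, and invariance forces $\sl(2)\otimes\sl(2)\to\C$ contributions to be multiples of the Killing form projected into the center. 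That reduces the computation to a short list of candidate cocycles, which I would then write down explicitly by imposing the Leibniz cocycle identity
\[
[x,\phi(y,z)]-[\phi(x,y),z]+[\phi(x,z),y]+\phi([x,y],z)-\phi([x,z],y)-\phi(x,[y,z])=0.
\]

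Second, for each candidate $\phi$ (and each linear combination thereof), I would form the deformed bracket $\mu_t=\mu_0+t\phi$ and check which ones extend to honest deformations; by Proposition~\ref{only at the inf level} and the general pattern of the earlier sections, many candidates will already be ruled out because the quadratic Leibniz obstruction is nonzero. For the remainder, I would test the metric condition directly: impose that there is a nondegenerate symmetric bilinear form $B_t$ with $B_t([x,y]_t,z)=B_t(x,[y,z]_t)$ for all triples of basis vectors. On $\sl(2)$, simplicity pins $B_t$ down to a multiple of the Killing form, so the only free data are the pairings $B_t(e_i,e_4)$ and $B_t(e_4,e_4)$. The new brackets involving $e_4$ then yield linear constraints on these free pairings, and the goal is to show that for every allowed $\phi$ these constraints force $B_t$ to be degenerate (or to violate symmetry, by the symmetry criterion of Proposition 2.1 combined with non-symmetry of the deformed bracket).

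I expect the main obstacle to be bookkeeping rather than any deep algebraic difficulty: the $\sl(2)$-representation argument and the rigidity of $\sl(2)$ essentially force the answer, but one still has to enumerate a handful of cocycles and, for each, work out the invariance equations and exhibit a triple $(e_i,e_j,e_k)$ on which symmetry or nondegeneracy fails. A secondary subtlety is to make sure that we do not overlook a cocycle which, though not metric individually, becomes metric in combination with a coboundary or with another cocycle; this is handled by working with a chosen set of representatives of $HL^2(\g;\g)$ and varying all free scalar parameters simultaneously when imposing the metric constraint.
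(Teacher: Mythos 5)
Your overall strategy---compute $HL^2(\sl(2)\oplus\C;\sl(2)\oplus\C)$, reduce to $\sl(2)$-invariant cochains by complete reducibility, and then test each surviving class for metricity---is the same in spirit as the paper's, and the equivariance reduction is a reasonable way to organize the computation. But the proposal stops exactly where the proof begins: you never produce the list of cocycles nor run the metric test, and those two steps are the entire content of the proposition. If you carry the reduction out, you find that $HL^2$ is one-dimensional: the bracket-valued invariant on $\sl(2)\otimes\sl(2)$ is a coboundary (of the projection onto $\sl(2)$); the Killing form valued in the center is \emph{not} a Leibniz cocycle (the term $f([x,z],y)$ in the Leibniz differential is not cancelled by invariance, in contrast with the antisymmetric Chevalley--Eilenberg setting, so this block contributes nothing); the invariant maps $\sl(2)\otimes\C\to\sl(2)$ and $\C\otimes\sl(2)\to\sl(2)$ fail the cocycle identity; and the only survivor is $\phi(e_4,e_4)=e_4$.

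More importantly, the failure mode you anticipate is the wrong one for that surviving cocycle. The infinitesimal deformation $\mu_0+t\phi$ with $\phi(e_4,e_4)=e_4$ \emph{does} admit an invariant nondegenerate symmetric form (the Killing form on $\sl(2)$ together with $B(e_4,e_4)\neq 0$ and $B(e_4,\sl(2))=0$): neither symmetry nor nondegeneracy of $B_t$ breaks down, so the linear constraints you plan to exhibit do not exist. What fails is the Leibniz identity at order $t^2$: one has $[e_4,[e_4,e_4]_t]_t=t^2e_4$ while the right-hand side of the identity vanishes, so the infinitesimal deformation does not extend to an honest deformation, and any higher-order correction removing this obstruction destroys metricity---this is precisely Proposition \ref{only at the inf level}. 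Your sketch does mention discarding cocycles with nonzero quadratic obstruction, so the needed ingredient is present, but as written the argument would let the one genuine cohomology class through the metric test; you must conclude via the obstruction, not via degeneracy of $B_t$.
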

 \begin{proof}
 The $2$-dimensional Leibniz cohomology space has dimension $1$ with a representative Leibniz $2$-cocycle $$\phi(e_4,e_4)= e_4.$$
 Here, no metric structure shows up with this Leibniz $2$-cocycle .

 \end{proof}
 \begin{thm}
 The diamond Lie algebra $\mathfrak{d}$ has Leibniz  deformations to $\sl(2)\oplus\C$ and to $\mathfrak{L}_2$.
 \end{thm}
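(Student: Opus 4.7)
The plan is to first compute the Leibniz cohomology space $HL^2(\mathfrak{d};\mathfrak{d})$ using the Leibniz differential $\delta$ defined in the preliminaries. Writing a generic $2$-cochain in coordinates $\phi(e_i,e_j)=\sum_k c_{ij}^k e_k$, the equation $\delta\phi=0$ becomes a finite linear system in the $c_{ij}^k$, and the coboundaries of $1$-cochains give the relations one has to quotient out. A basis of $HL^2$ can then be written explicitly. Since $\mathfrak{d}$ is solvable with $e_4$ central, I expect representatives supported on brackets involving $e_4$, as well as some cocycles that redistribute mass among $e_1,e_2,e_3$.

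Next, for each basis cocycle $\phi$, I would test whether it extends to a real deformation by looking at the Massey square $[\phi,\phi]\in HL^3$; if this obstruction vanishes I try $\mu_t=\mu_0+t\phi$ directly, and otherwise add higher-order corrections $t^2\phi_2+\cdots$ to kill the successive obstructions. In favourable cases no correction is required and the deformed bracket $\mu_t$ satisfies the Leibniz identity for every $t$.

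To identify the isomorphism class of the deformed algebra, I would construct an explicit (possibly $t$-dependent) change of basis $f_i=\sum_j a_{ij}(t)e_j$ and compare the resulting structure constants with those of $\sl(2,\C)\oplus\C$ and $\mathfrak{L}_2$. The deformation to $\mathfrak{L}_2$ is expected to come from a cocycle that introduces the bracket $[e_4,e_4]=e_3$ and adjusts the brackets of $e_4$ with $e_1$ and $e_2$, so that after a linear change of basis one recovers the defining relations of $\mathfrak{L}_2$. The deformation to $\sl(2,\C)\oplus\C$, by contrast, is necessarily a jump deformation: $\sl(2,\C)\oplus\C$ is rigid and not solvable, so the change of basis realising it from $\mathfrak{d}_t$ must involve negative powers of $t$, which is permissible at any fixed nonzero value of the parameter.

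The main obstacle will be the $\sl(2,\C)\oplus\C$ case: identifying the correct cocycle and exhibiting the singular change of basis that turns the deformed brackets at $t\neq 0$ into the Chevalley relations of $\sl(2)$ together with a one-dimensional centre is the delicate point, since the isomorphism type jumps discontinuously at $t=0$. The $\mathfrak{L}_2$ deformation is comparatively routine: once the right cocycle is identified, both the Leibniz identity at all orders and the isomorphism with $\mathfrak{L}_2$ follow by direct verification.
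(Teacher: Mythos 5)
Your plan is essentially the paper's proof: compute $HL^2(\mathfrak{d};\mathfrak{d})$ (the paper finds four non-equivalent cocycles, two of which are relevant here), set $\mu_t=\mu_0+t\phi$ for each, and exhibit an explicit $t$-dependent basis change identifying the deformed algebra with $\mathfrak{L}_2$ and with $\sl(2,\C)\oplus\C$ respectively. The only details worth noting are that the cocycle producing $\mathfrak{L}_2$ is the symmetric one $\phi_1(e_1,e_1)=e_4$ (it becomes the relation $[e_4',e_4']=e_3'$ only after the change of basis $e_4'=ie_1$, $e_3'=-te_4$), and that both deformations, not only the $\sl(2,\C)\oplus\C$ one, are jump deformations whose identifying basis changes are singular at $t=0$.
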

  \emph{Proof.}
 The diamond Lie algebra has $4$ non-equivalent Leibniz $2$-cocycles. Two of them provide deformations to a metric Leibniz algebra. The first such Leibniz cocycle has the form
 $$
 \phi_1(e_1,e_1)=e_4,
 $$
 and the infinitesimal deformation  $\mathfrak{L_t}^1=[-,-] + t\phi_1$ is isomorphic to the Leibniz algebra $\mathfrak{L}_2$, with the change in basis elements given by
 $$
 e_1'= \sqrt{\frac{t}{2i}}( e_3+ie_2),\quad
 e_2'= \sqrt{\frac{t}{2i}}( e_3 - ie_2),\quad 
 e_3'=-te_4,\quad 
 e_4'=ie_1.
 $$
 The second Leibniz cocycle, producing metric Leibniz algebra is
$$
\phi_2(e_2,e_3)=e_1, \phi_2(e_3,e_2)=-e_1.
$$
The resulting infinitesimal deformation $\mathfrak{L}_t^2= [-,-] + t\phi_2$
is isomorphic to $\sl(2)\oplus\C$ with the basis change
$$
e_1'= \frac{1}{\sqrt{t}} (i e_2 - e_3)\quad
e_2'= \frac{1}{\sqrt{t}} (i e_2 + e_3),\quad
e_3'= 2i (e_1+ \frac{1}{t}e_4) \quad
e_4'=e_4.
$$
We found that no linear combination of $2$-cocycles gives a metric deformation.
  \begin{thm}
 The Leibniz algebra $\mathfrak{R}_{20}(0)$ has one metric deformation to $\mu_1 \oplus \mu_1$.
 \end{thm}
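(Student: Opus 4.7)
My approach mirrors the strategy used for the diamond Lie algebra in the previous theorem. The plan is to compute the space of Leibniz $2$-cocycles of $\mathfrak{R}_{20}(0)$, identify a spanning set of nonequivalent representatives, and then test each one -- together with its linear combinations with the other representatives -- for the property of generating a metric Leibniz deformation.

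First, I would write a general $2$-cochain $\phi$ via its $16$ values $\phi(e_i,e_j)\in\mathrm{span}(e_1,e_2,e_3,e_4)$ ($64$ unknown scalars) and impose the cocycle condition $\delta\phi=0$ as a linear system. The coboundaries $\delta f$, with $f\in\hom(\mathfrak{R}_{20}(0),\mathfrak{R}_{20}(0))$, sit in the kernel as a subspace, and the quotient yields a finite list of cohomology representatives. By Proposition \ref{only at the inf level}, any representative of the form $\phi(e_i,e_i)=e_i$ may be discarded immediately, since it cannot extend beyond the infinitesimal level.

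For each remaining representative $\phi$, I would check whether $[-,-]+t\phi$ defines an honest Leibniz algebra (possibly only after higher-order corrections) and whether the resulting algebra admits an invariant, non-degenerate, symmetric bilinear form. Most candidates should fail on the first count: the deformed bracket typically loses the symmetric-Leibniz property, which rules out metricity by the proposition characterizing metric Leibniz algebras as symmetric. The expected surviving candidate is the cocycle
$$\phi(e_1,e_1)=e_4.$$
Since $e_4$ is central in $\mathfrak{R}_{20}(0)$, no obstruction appears and the infinitesimal deformation is already a genuine deformation. To exhibit the isomorphism with $\mu_1\oplus\mu_1$, I would set
$$f_1=e_1,\qquad f_3=e_1-t\,e_2,\qquad f_2=t\,e_4,\qquad f_4=-t\,e_4+t^2\,e_3,$$
and verify, using centrality of $e_3,e_4$, that $[f_1,f_1]_t=f_2$, $[f_3,f_3]_t=f_4$, and $[f_1,f_3]_t=[f_3,f_1]_t=0$, while the brackets involving $f_2$ or $f_4$ vanish automatically.

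The main obstacle will be the systematic enumeration and elimination of the other Leibniz $2$-cocycles. One must confirm that no linear combination of the cohomology representatives -- in particular, no combination that mixes the distinguished cocycle above with other seemingly innocuous ones -- produces a second, genuinely different, metric deformation. Checking the symmetric-Leibniz property of the deformed product offers an efficient filter, but the case analysis on the cocycle space remains the most laborious step.
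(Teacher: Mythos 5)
Your overall strategy is the same as the paper's: enumerate representatives of $HL^2(\mathfrak{R}_{20}(0);\mathfrak{R}_{20}(0))$, discard the non-metric and non-extendable ones, and exhibit an explicit basis change identifying the surviving linear deformation with $\mu_1\oplus\mu_1$. The difficulty is with the cocycle you single out. The cochain $\phi(e_1,e_1)=e_4$ is in fact a \emph{coboundary}: taking $f(e_1)=\tfrac12 e_2$, $f(e_2)=f(e_3)=0$, $f(e_4)=\tfrac12 e_3$, one checks from the paper's formula $\delta^1 f(x,y)=[x,f(y)]+[f(x),y]-f([x,y])$ that $\delta^1 f(e_1,e_1)=e_4$ and $\delta^1 f$ vanishes on all other pairs of basis vectors (the cross terms $[e_1,e_2]$, $[e_2,e_1]$ contribute $\tfrac12 e_3$, which is exactly cancelled by $f(e_4)$). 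So under your own plan --- ``the quotient yields a finite list of cohomology representatives'' --- this candidate is eliminated before you ever test it, and as written your case analysis would find no nontrivial metric class. The representative that actually survives the quotient is $\phi(e_1,e_1)=e_3$ (note that $\delta^1 f(e_1,e_1)$ always lies in $\mathrm{span}(e_4)$, so this class is genuinely nonzero); the paper uses it with the basis change $e_1'=e_1+\sqrt{t}\,e_2$, $e_3'=e_1-\sqrt{t}\,e_2$, $e_2'=2\sqrt{t}(\sqrt{t}\,e_3+e_4)$, $e_4'=2\sqrt{t}(\sqrt{t}\,e_3-e_4)$, entirely parallel to yours.

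To be fair, your explicit computation is not wrong as a direct construction: the bracket $[-,-]+t\phi$ with $\phi(e_1,e_1)=e_4$ does define a symmetric Leibniz algebra (everything lands in the central subspace $\mathrm{span}(e_3,e_4)$, so there are no obstructions), and your vectors $f_1=e_1$, $f_3=e_1-te_2$, $f_2=te_4$, $f_4=-te_4+t^2e_3$ do form a basis realizing $\mu_1\oplus\mu_1$ for $t\neq 0$. So you have exhibited a jump deformation to $\mu_1\oplus\mu_1$, but one whose infinitesimal part is cohomologically trivial; the ``jump'' in your family is a second-order phenomenon invisible to the infinitesimal classification you set up. The fix is simply to replace your distinguished cocycle by $\phi(e_1,e_1)=e_3$ and rerun the same verification; the remaining (unexecuted) elimination of the other eight classes and their linear combinations is left at the same level of detail as in the paper.
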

 \begin{proof}
 
 The second cohomology space of $\mathfrak{R}_{20}(0)$  is of dimension $9$, but only one cocycle gives metric Leibniz algebra:
 $$
 \phi(e_1,e_1)=e_3.
 $$
 The infinitesimal deformation $\mathfrak{L}_t=[-,-] + t\phi$ is isomorphic to $\mu_1 \oplus \mu_1$. The transformation can be obtained by a change of basis given by the following.
 $$
 \begin{aligned}
 e_1'=&e_1+ \sqrt{t}e_2, ~
 e_2'=& 2\sqrt{t} ( \sqrt{t} e_3+e_4)\\
 e_3'=&e_1- \sqrt{t}e_2,~
 e_4'=& 2\sqrt{t} ( \sqrt{t} e_3-e_4).
 \end{aligned}
$$
Here again, no other linear combination of $2$-cocycles gives metric deformation.
\end{proof}
 \begin{thm}
 The Leibniz algebra $\mathfrak{L}_2$ has no metric deformations.
 \end{thm}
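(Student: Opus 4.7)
The plan is to proceed exactly as in the previous theorems of this section: compute $HL^2(\mathfrak{L}_2;\mathfrak{L}_2)$ explicitly, list representative Leibniz $2$-cocycles modulo coboundaries, and then screen the resulting infinitesimal deformations for the metric property. First I would set up the standard Leibniz cochain complex on the basis $\{e_1,e_2,e_3,e_4\}$, solve $\delta^2\phi=0$ as a system of linear equations on the coefficients of $\phi\in \hom_{\C}(\mathfrak{L}_2^{\otimes 2},\mathfrak{L}_2)$, and then quotient by the image of $\delta^1$. By Proposition \ref{only at the inf level} and the remark following it, every cocycle of the form $\phi(e_i,e_i)=e_i$ may be discarded at the outset, since it cannot be extended past the infinitesimal order while remaining metric.

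For the remaining cocycles, and in fact for a generic linear combination $\phi=\sum_k a_k\phi_k$, I would form the deformed bracket $\mu_t=[-,-]+t\phi$ and first impose the symmetry constraint coming from Proposition 2 (i), which forces every metric Leibniz algebra to satisfy both the left and right Leibniz identities. Since $\mathfrak{L}_2$ is already borderline-symmetric, carrying the lone non-antisymmetric term $[e_4,e_4]=e_3$, any perturbation that breaks the delicate two-sided identity can be ruled out immediately. This is the same filter that eliminated most candidate cocycles of $\lambda_2$ in Proposition \ref{only at the inf level}, and I expect it to rule out the bulk of the combinations $\phi$ here as well, possibly leaving only coboundaries.

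For any $\phi$ surviving the symmetry test, the final step is to look for a non-degenerate invariant symmetric bilinear form $B_t=B_{\mathfrak{L}_2}+tB'+\cdots$ for $\mu_t$. Expanding $B_t(\mu_t(x,y),z)=B_t(x,\mu_t(y,z))$ in powers of $t$, the order-$t$ equation is a linear system in the entries of $B'$ whose solvability imposes an obstruction involving $\phi$ and $B_{\mathfrak{L}_2}$. The expected outcome is that this obstruction forces $\phi$ to be a coboundary, so that a suitable change of basis returns $\mu_t$ to the original bracket; combining both steps then yields that $\mathfrak{L}_2$ admits no non-trivial metric deformation.

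The main obstacle is the bookkeeping. The space $HL^2(\mathfrak{L}_2;\mathfrak{L}_2)$ is likely of moderate dimension, and one must analyse full linear combinations $\sum_k a_k\phi_k$ rather than individual representatives, since a combination can in principle restore a symmetry that each $\phi_k$ individually destroys. The discipline of applying the two-sided Leibniz identity as a first filter, and only then invoking the invariance equation for $B_t$, is what should keep the underlying linear-algebra computation tractable.
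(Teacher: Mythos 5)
Your strategy is exactly the one the paper uses throughout (and in particular for this theorem): compute $HL^2(\mathfrak{L}_2;\mathfrak{L}_2)$, list representative cocycles, discard the $\phi(e_i,e_i)=e_i$ type by Proposition \ref{only at the inf level}, and screen the remaining cocycles and their linear combinations for the metric property. However, as written your text is a plan rather than a proof: the decisive content --- that the cohomology space is in fact $2$-dimensional (as the paper asserts) and that neither representative cocycle nor any linear combination produces a metric Leibniz algebra --- is never actually established. Phrases such as ``I expect it to rule out the bulk of the combinations'' and ``the expected outcome is that this obstruction forces $\phi$ to be a coboundary'' are exactly the steps that constitute the proof, and they are left unverified. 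Since the whole theorem is a finite linear-algebra check, nothing short of carrying it out (or at least recording its outcome) closes the argument.

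There is also a logical gap in your final filter. To show that a deformed algebra $\mu_t$ is \emph{not} metric you must show it admits \emph{no} invariant non-degenerate symmetric bilinear form at all; you instead test only for forms of the shape $B_t=B_{\mathfrak{L}_2}+tB'+\cdots$ deforming the chosen inner product of $\mathfrak{L}_2$. Failure of that particular ansatz does not by itself exclude an invariant form on $\mu_t$ that is not a perturbation of $B_{\mathfrak{L}_2}$. The correct (and still finite) check is to solve the full invariance system $B(\mu_t(x,y),z)=B(x,\mu_t(y,z))$ for an arbitrary symmetric matrix $B$ and verify that every solution is degenerate --- or, as the paper often does more cheaply, to observe that the deformed bracket fails to be a symmetric Leibniz algebra, which by Proposition 2(i) already precludes any metric structure. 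Your symmetry filter is the right first step for that shortcut, but it too needs to be executed on the actual cocycles of $\mathfrak{L}_2$ rather than invoked generically.
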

 \begin{proof}
 The $2$-dimensional Leibniz cohomology space has dimension $2$, but no metric structure shows up with those cocycles.
 \end{proof}
 \begin{thm}
 The Leibniz algebra $\lambda_2 \oplus\C=\mu_1 \oplus\C^2$ ($[e_1,e_1]=e_3$) deforms to the metric Leibniz algebras $\mu_1 \oplus \mu_1$, to $\mathfrak{L}_2$ and $\mathfrak{R}_{20}(0)$.
 \end{thm}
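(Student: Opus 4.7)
The plan is to follow the same template used for the preceding theorems in this section: first, compute a basis of $HL^2(\lambda_2\oplus\C;\lambda_2\oplus\C)$, discarding by Proposition \ref{only at the inf level} those representatives of the form $\phi(e_i,e_i)=e_i$; then, among the remaining cocycles and their linear combinations, identify exactly those that give rise to a Leibniz bracket admitting a non-degenerate, invariant, symmetric bilinear form. Because $\lambda_2\oplus\C=\mu_1\oplus\C^2$ is very small and has almost no relations, $HL^2$ is expected to be comparatively large, so the first task is to list a convenient set of cohomology representatives modulo $\delta^1$-coboundaries and then check the Leibniz identity for the $t$-deformed bracket $[-,-]+t\phi$ to see for which $\phi$ (or $\phi=\sum c_i\phi_i$) the deformation closes without higher-order corrections.

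For each of the three targeted metric Leibniz algebras I plan to exhibit a specific infinitesimal cocycle together with an explicit linear change of basis over $\C[[t]]$ realizing the isomorphism, in the same style as the basis changes used for the diamond algebra $\mathfrak{d}$ and for $\mathfrak{R}_{20}(0)$. For $\mu_1\oplus\mu_1$ a natural candidate is a cocycle that introduces an independent second square term $[e_2,e_2]=e_4$ (mirroring $[e_1,e_1]=e_3$), so that after a rescaling involving $\sqrt t$ the bracket splits into two commuting copies of $\mu_1$. For $\mathfrak{R}_{20}(0)$ one expects a cocycle producing the mixed brackets $[e_1,e_2]$ and $[e_2,e_1]$ on a new generator, together with a quadratic $[e_2,e_2]$ contribution compatible with the already existing $[e_1,e_1]=e_3$. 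For $\mathfrak{L}_2$ the cocycle should add a Lie-type skew bracket $[e_1,e_2]=e_3$, adjoint brackets involving the spectator basis vector $e_4$, and a square $[e_4,e_4]=e_3$; combining these with the original $[e_1,e_1]=e_3$ and absorbing the parameter $t$ by basis transformations of the form $e_i'=\alpha_i(t)e_i+\beta_i(t)e_j$ should recover $\mathfrak{L}_2$. In each case the invariant inner product of the target pulls back through the explicit basis change to a non-degenerate symmetric matrix on $\lambda_2\oplus\C$, which also has to be verified.

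The main obstacle will be bookkeeping and exhaustion: computing $HL^2$ and pinning down the correct linear combinations that simultaneously (i) satisfy the Leibniz identity to all orders in $t$, so that the infinitesimal deformation is an honest one with no $t^k$ obstruction for $k\ge 2$, and (ii) admit an invariant symmetric bilinear form. Many individual cocycles give only infinitesimal deformations, and the metric condition further restricts which combinations survive. For each surviving combination one must first verify that the resulting bracket is symmetric in the sense of the Proposition cited from \cite{BH1}, a necessary condition for metricity, and then produce a non-degenerate invariant form explicitly. The statement claims exactly three non-isomorphic metric limits, so the final step is to show that every metric linear combination of $2$-cocycles is equivalent, via a suitable $\C[[t]]$-linear change of basis, to one of the three explicit deformations exhibited above, and that no further metric deformation arises.
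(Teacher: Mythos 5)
Your proposal follows essentially the same route as the paper: compute the $27$-dimensional $HL^2$, discard the cocycles of type $\phi(e_i,e_i)=e_i$ by the cited Proposition, and realize each metric target by an explicit cocycle plus a $t$-dependent change of basis --- indeed your candidate $[e_2,e_2]=e_4$ for reaching $\mu_1\oplus\mu_1$ is exactly the cocycle $\phi_2$ the paper uses, with the same rescaling idea $e_4'=te_4$. If anything, your plan is more explicit than the paper's for the $\mathfrak{L}_2$ and $\mathfrak{R}_{20}(0)$ cases, where the paper only indicates that suitable linear combinations of cocycles together with the basis substitutions $e_1\to e_4$ and $e_1\to e_2$ produce those algebras.
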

 \begin{proof}
 The 2-dimensional cohomology space has dimension $27$. Out of those non-equivalent ones there are $4$ cocycles providing metric structures:
  \begin{itemize}
 \item $\phi_1(e_2,e_2)= e_2$,
 \item $\phi_2(e_2,e_2)= e_4$,
 \item $\phi_3(e_4,e_4)= e_2$,
 \item $\phi_4(e_4,e_4)= e_4$.
 \end{itemize}
 As we agreed, we leave out $\phi_1$ and $\phi_4$ (see Remark  8). The cocycles $\phi_2$ $\phi_3$ define the same Leibniz algebra with the basis change $e_2$ and $e_4$. So we only have to check one of them. 
 
 The  Leibniz algebra obtained from $\phi_2$  and $\phi_3$ is isomorphic to $\mu_1 \oplus \mu_1$ and the isomorphism can be given by the transformation
with the basis change

  $$
  e_1'= e_1, \quad e_2'= e_2,\quad e_3'= e_3,\quad e_4' = t e_4.
  $$
  The isomorphism from $\phi_3$ with $\mu_1 \oplus \mu_1$ , can be obtained by the change of basis
  $$
  e_1'=e_1, \quad e_2'= t e_2, \quad e_3'= e_3,\quad e_4'= e_4.
  $$
  As linear combination of cocycles, and basis change $e_1 \to e_4$, we get the deformation to $\mathfrak{L}_2$. With basis change $e_1 \to e_2$, we get the deformation to $\mathfrak{R}_{20}(0)$.
  
 \end{proof}
  \begin{thm}
 The Leibniz algebra $\mu_1 \oplus \mu_1$ has no metric deformations.
 \end{thm}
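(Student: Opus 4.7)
The plan is to follow the template used for the previous theorems in this section: compute $HL^2(\mu_1\oplus\mu_1;\mu_1\oplus\mu_1)$ via the cochain complex from the Preliminaries, list a set of non-equivalent representative $2$-cocycles, discard those ruled out by Proposition \ref{only at the inf level} and Remark 8, and verify that none of the remaining cocycles, nor any linear combination of them, yields a deformation landing on a $4$-dimensional metric Leibniz algebra. Since the classification of such algebras has already been recalled, it suffices to exclude each target from the list $\sl(2,\C)\oplus\C$, $\mathfrak d$, $\mathfrak R_{20}(0)$, $\mathfrak L_2$, $\lambda_2\oplus\C$, and $\mu_1\oplus\mu_1$ itself (the last giving only trivial deformations).

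First I would set up $\delta^2\phi=0$ for $\phi\in CL^2(\mu_1\oplus\mu_1;\mu_1\oplus\mu_1)$ using the explicit formula from the Preliminaries, with $\mu_0$ given by $[e_1,e_1]=e_2$, $[e_3,e_3]=e_4$, and quotient by the image of $\delta^1$. To keep the casework manageable I would exploit two symmetries of $\mu_0$: the $\Z_2$-action swapping the two copies of $\mu_1$, and the torus action $(e_1,e_2)\mapsto(\alpha e_1,\alpha^2 e_2)$, $(e_3,e_4)\mapsto(\beta e_3,\beta^2 e_4)$ with $\alpha,\beta\in\C^\times$, which normalize the bracket and act on $HL^2$. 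These reduce the enumeration to a handful of orbit representatives and low-parameter families of linear combinations. Cocycles of the form $\phi(e_i,e_i)=e_i$ are discarded immediately.

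For each surviving representative (and each family of linear combinations) $\phi$ I would test, in this order: (i) whether the deformed bracket $[-,-]_t=[-,-]+t\phi$ satisfies the left \emph{and} right Leibniz identities, since by the Proposition at the end of Section 2 a metric Leibniz algebra must be symmetric; (ii) whether a $t$-deformation $B(t)=B_0+tB_1+\cdots$ of the original invariant form solves $B(t)([x,y]_t,z)=B(t)(x,[y,z]_t)$ while remaining non-degenerate and symmetric. I expect most candidates to fail (i) at the first nontrivial order, and the remaining ones to be shown equivalent to $\mu_1\oplus\mu_1$ by an explicit $t$-dependent change of basis, hence trivial.

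The main obstacle is the likely large dimension of $HL^2$ for this highly decomposable algebra and the necessity of handling linear combinations, not just individual cocycle representatives, when searching for a metric deformation. As a cross-check to narrow the candidates before the symmetry/invariance analysis, I would use semicontinuity of the derived and lower central series: $\mu_1\oplus\mu_1$ is $2$-step nilpotent, so its deformations are at worst solvable, which rules out $\sl(2,\C)\oplus\C$; the non-nilpotent solvable targets $\mathfrak d$ and $\mathfrak L_2$ are excluded by semicontinuity of nilpotency class; and the remaining nilpotent targets $\mathfrak R_{20}(0)$ and $\lambda_2\oplus\C$ are distinguished from $\mu_1\oplus\mu_1$ by the structure of the generator relations (the generators of $\mu_1\oplus\mu_1$ commute, whereas those of $\mathfrak R_{20}(0)$ do not; and $\lambda_2\oplus\C$ has strictly larger center). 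Combining these invariant-based exclusions with the cohomological casework above yields the stated conclusion.
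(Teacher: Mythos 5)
Your overall skeleton (compute $HL^2(\mu_1\oplus\mu_1;\mu_1\oplus\mu_1)$, list non-equivalent representatives, discard the $\phi(e_i,e_i)=e_i$ type by Proposition \ref{only at the inf level}, then test symmetry and the existence of an invariant non-degenerate form for each survivor and for linear combinations) is exactly the approach the paper takes; the paper's proof simply reports the outcome of that computation (the cohomology has dimension $8$ and none of the cocycles produces a metric structure). However, your text stops at the plan: you never exhibit the $8$ cocycles, nor verify any of the claimed failures, so as written this is a proof strategy rather than a proof. The theorem's entire content is the result of that finite check, so the check has to actually be carried out.

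More seriously, your ``cross-check'' via semicontinuity is backwards and would lead you astray if you used it to narrow the candidates. Nilpotency and solvability are preserved under \emph{degeneration} (passing to the closed orbit), not under \emph{deformation}: for generic small $t$ the algebra $\mu_t$ has $\mu_0$ in its orbit closure, so the deformed algebra can perfectly well be less degenerate than the original. The paper itself illustrates this repeatedly: the nilpotent Lie algebra $W_3$ deforms to $\sl(2,\C)\oplus\C^2$ and to $\mathfrak d\oplus\C$, and the nilpotent $\lambda_2\oplus\C$ deforms to the non-nilpotent solvable $\mathfrak L_2$. So your claims that $2$-step nilpotency of $\mu_1\oplus\mu_1$ rules out $\sl(2,\C)\oplus\C$, $\mathfrak d$ and $\mathfrak L_2$ as targets are unjustified (the correct exclusions must come from the explicit cocycle analysis). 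The argument for excluding $\lambda_2\oplus\C$ via the center happens to point in the right direction (the dimension of the center is upper semicontinuous along a deformation), but the ``generators commute'' criterion for $\mathfrak R_{20}(0)$ is not a semicontinuous invariant either. In short: keep the cohomological casework, drop or reverse the semicontinuity shortcuts, and actually perform the enumeration of the $8$ cocycles.
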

 \begin{proof}
 The $2$-dimensional Leibniz cohomology space has dimension $8$, but no metric structure shows up with those cocycles.
 \end{proof}
 \[
\xymatrixrowsep{0.25in}
\xymatrixcolsep{0.15in}
\xymatrix{
& \mathfrak{d}\ar[rd]\ar[ld] &  & \lambda_2\oplus \C \ar[rd]\ar[ld]& \\
sl(2,\C)\oplus \C& & \mathfrak{L}_2 &  & \mathfrak{R}_{20}(0)\ar[d]\\
 & & & &\mu_1\oplus\mu_1
 }
\]

 {\bf{Figure 1:}} Metric deformations of 4-dimensional metric Leibniz algebras
 
 \. (The arrows show jump deformations.)
 
 \section{$5$-dimensional metric Leibniz algebras}
In dimension $5$, metric Lie algebras are classified in \cite{Ben, O1}. First of all, there are the decomposable  Lie algebras $ \sl(2,\C)\oplus\C^2$ and $\mathfrak{d}\oplus\C$. 
 
The third one is the indecomposable nilpotent Lie algebra
  $$
  W_3: [e_3,e_4]=e_2, \ [e_3,e_5]=e_1, \ [e_4,e_5]=e_3
  $$
  with a simple invariant inner product
  $$
  B_{W_3}=\begin{pmatrix}0&0&0&-1&0\\
  0&0&0&0&1\\
  0&0&1&0&0\\
  -1&0&0&1&0\\
  0&1&0&0&1
  \end{pmatrix}.
  $$
  We have the decomposable metric non-Lie Leibniz algebras: 
  \begin{itemize}
  \item $\lambda_2\oplus\C^2 =\mu_1\oplus\C^3$,
  \item  $\mathfrak{R}_{20}(0)\oplus\C$,
  \item $\mathfrak{L}_2\oplus\C$,
  \item $\mu_1\oplus\mu_1\oplus\C$.
  \end{itemize}
  The classification of $5$-dimensional indecomposable nilpotent Leibniz algebras is in \cite{D}. Among those there are no metric ones. Special solvable $5$-dimensional Leibniz algebras are classified in different papers, from which we could get a complete classification. Here again there are no  new indecomposable metric Leibniz algebras. With our deformation method, we found two nilpotent Leibniz algebras which were not listed in the literature.

  Let us consider the above mentioned examples case by case. The first one is obvious as in dimension four.  
  \begin{prop}
  The Lie algebra $sl(2,\C)\oplus\C^2$ has no metric Leibniz deformations.  
  \end{prop}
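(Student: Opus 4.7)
The plan is to mirror the strategy used for the analogous $4$-dimensional case $\sl(2,\C)\oplus\C$: compute the second Leibniz cohomology $HL^2(\sl(2,\C)\oplus\C^2;\sl(2,\C)\oplus\C^2)$, list a set of representative Leibniz $2$-cocycles, and verify that no cocycle (and no linear combination of them) yields an invariant, symmetric, non-degenerate bilinear form on the deformed bracket. Since $\sl(2,\C)$ is rigid as both a Lie and a Leibniz algebra (by the result of Loday--Pirashvili cited earlier), any non-trivial deformation must be supported, up to coboundary, on the abelian factor $\C^2$ spanned by $e_4,e_5$ or on its coupling with $\sl(2,\C)$.

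First I would fix a standard basis $\{e_1,e_2,e_3\}$ for $\sl(2,\C)$ (with $[e_1,e_2]=e_3$, $[e_2,e_3]=2e_2$, $[e_1,e_3]=-2e_1$) together with $\{e_4,e_5\}$ spanning the abelian complement, and then run the Leibniz coboundary operator $\delta^2$ on a general $2$-cochain. Using that $\sl(2,\C)$ is semisimple with trivial $HL^i$ in low degrees for $i\leq 2$, Künneth-type reasoning concentrates the non-trivial classes in cochains whose non-zero entries involve $e_4$ and/or $e_5$. I expect the surviving representative cocycles to be of the types $\phi(e_i,e_j)=e_k$ with $i,j,k\in\{4,5\}$, together with possibly some cross-cocycles that couple the abelian part to $\sl(2,\C)$; by the usual cohomological argument using the invariance of the Killing form these cross-cocycles are coboundaries.

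Next, following Remark 8 (i.e.\ the conclusion of Proposition~\ref{only at the inf level}), the cochains of shape $\phi(e_i,e_i)=e_i$ on the abelian part are ruled out, since they fail to extend past the infinitesimal order. What remains is then a short finite list of candidates (analogous to $\phi(e_4,e_4)=e_4$ in the $4$-dimensional case). For each candidate I would write down the deformed bracket $\mu_t=[-,-]+t\phi$ and test whether there exists a symmetric non-degenerate matrix $B$ satisfying the invariance condition $B(\mu_t(x,y),z)=B(x,\mu_t(y,z))$ for all basis triples. This boils down to a linear system in the entries of $B$ parameterized by $t$; checking that every solution is degenerate (or fails symmetry) for generic $t$ finishes the job. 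The same verification has to be repeated for an arbitrary linear combination $\sum c_i \phi_i$ of the admissible cocycles, which reduces to a finite parameter sweep in $(c_1,\ldots,c_r)$.

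The main obstacle I expect is organizational rather than conceptual: enumerating $HL^2$ in a minimal way so that the invariance-matrix calculation remains tractable, and being careful with the exclusion rule from Remark 8 so that no genuinely metric cocycle is thrown away. If all representative non-excluded cocycles and their linear combinations fail either the symmetry of the deformed bracket (forcing the target not to be a metric Leibniz algebra, which must be symmetric by Proposition on page~\pageref{only at the inf level}) or the non-degeneracy/invariance of $B$, the statement follows immediately.
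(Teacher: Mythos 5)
Your proposal follows essentially the same route as the paper: the paper's proof simply exhibits the $8$ representative Leibniz $2$-cocycles, all of the form $\phi(e_i,e_j)=e_k$ with $i,j,k\in\{4,5\}$ (exactly the concentration on the abelian factor you predict from the rigidity of $\sl(2,\C)$), and verifies that none of them yields a metric structure. The only difference is presentational --- you spell out the reduction to the $\C^2$ factor and the exclusion rule from Remark 8, whereas the paper states the cocycle list and the negative verification directly.
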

  
  \begin{proof} 
  The 2-dimensional Leibniz cohomology space has $8$ non-equivalent cocycles in a basis, but no metric structure shows up with those cocycles. This set comprises of the cocycle which already appeared in the four dimensional computation for $sl(2,\C)$ and there are $7$ additional cocycles as follows.
  
\begin{enumerate}
\item $\phi_1(e_4,e_4)= e_4$;
\item $\phi_2(e_4,e_4)= e_5$;
\item $\phi_3(e_4,e_5)= e_4$;
\item $\phi_4(e_4,e_5)= e_5$;
\item $\phi_5(e_5,e_4)= e_4$;
\item $\phi_6(e_5,e_4)= e_5$;
\item $\phi_7(e_5,e_5)= e_4$;
\item $\phi_8(e_5,e_5)= e_5$.
\end{enumerate}
  \end{proof} 
  
 \begin{thm}
The Lie algebra $\mathfrak{d}\oplus\C$ has Leibniz  deformations to $\sl(2)\oplus\C^2$ and $\mathfrak{L}_2\oplus \C$. Moreover, $\mathfrak{d}\oplus\C$  has one nontrivial metric infinitesimal defomation, which can not be extended to real metric deformation.
 \end{thm}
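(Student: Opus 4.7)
The plan is to reduce to a cohomology computation, identify the two deformation targets by basis changes analogous to those in Section~3, then isolate the metric infinitesimal class and compute the obstruction to extending it metrically. First I would write down a general $2$-cochain $\phi(e_i, e_j) = \sum_k c_{ij}^k e_k$ on the basis $\{e_1,\dots,e_5\}$ of $\mathfrak{d}\oplus\C$ (with $e_5$ central) and compute $HL^2(\mathfrak{d}\oplus\C;\mathfrak{d}\oplus\C)$ by solving the linear system $\delta\phi=0$ modulo the image of $\delta^1$. This extends the $4$-dimensional computation $HL^2(\mathfrak{d};\mathfrak{d})$ by the new classes involving the central generator $e_5$.

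Next I would recognize that the two cocycles already present in the $4$-dimensional case continue to give the asserted deformations. The cocycle $\phi_1(e_1,e_1)=e_4$ produces an infinitesimal deformation with vanishing $t^2$-obstruction, and the basis change from the diamond algebra proof, augmented by $e_5'=e_5$, exhibits an isomorphism with $\mathfrak{L}_2\oplus\C$. Similarly, $\phi_2(e_2,e_3)=e_1=-\phi_2(e_3,e_2)$ yields an honest deformation to $\sl(2,\C)\oplus\C^2$ via the same augmented basis change. Checking integrability in both cases reduces to verifying that the Gerstenhaber bracket $[\phi_i,\phi_i]$ is a coboundary (in fact zero), which follows from the analogous computation already carried out for $\mathfrak{d}$.

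The main obstacle lies in the final claim. Among the new cohomology classes that involve $e_5$ nontrivially I would isolate the unique one whose representative $\phi$ is metric with respect to $B_{\mathfrak{d}}\oplus (1)$, meaning $\phi$ satisfies both the cocycle condition and the infinitesimal invariance equation $B(\phi(x,y),z)+B(x,\phi(y,z))=0$. Then, to rule out a real metric extension, I would attempt to solve $\delta\phi_2 = -\tfrac{1}{2}[\phi,\phi]$ simultaneously with the second-order invariance constraint for some perturbed form $B_t = B + tB_1 + t^2 B_2 + \cdots$. The difficult part is to show that although $\delta\phi_2 = -\tfrac{1}{2}[\phi,\phi]$ may be solvable in the full Leibniz cochain complex, no such $\phi_2$ is compatible with any admissible $B_2$; in other words, the obstruction lives in the metric subcomplex rather than in $HL^3(\mathfrak{d}\oplus\C;\mathfrak{d}\oplus\C)$ itself. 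Carrying out this obstruction analysis inside the metric subcomplex, rather than the initial cohomology enumeration, is where the real work of the proof lies.
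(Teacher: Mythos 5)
Your overall route is the paper's: compute $HL^2(\mathfrak{d}\oplus\C;\mathfrak{d}\oplus\C)$ (the paper finds $13$ classes, $4$ of them the skew-symmetric ones inherited from $\mathfrak{d}$), reuse the two diamond cocycles $\phi_1(e_1,e_1)=e_4$ and $\phi_2(e_2,e_3)=e_1=-\phi_2(e_3,e_2)$ together with the Section~3 basis changes augmented by $e_5'=e_5$, and observe that the quadratic self-brackets vanish so these integrate to honest deformations of $\mathfrak{d}\oplus\C$ onto $\mathfrak{L}_2\oplus\C$ and $\mathfrak{sl}(2,\C)\oplus\C^2$. That part is sound.

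The gap is in the last step. First, the first-order invariance condition you impose, $B(\phi(x,y),z)+B(x,\phi(y,z))=0$, has the wrong sign: invariance of the deformed bracket with respect to $B$ at order $t$ reads $B(\phi(x,y),z)=B(x,\phi(y,z))$. As written, your condition rejects precisely the cocycle that carries the metric infinitesimal deformation, namely $\phi(e_5,e_5)=e_5$ (the family with the diamond brackets plus $[e_5,e_5]=te_5$), since $B(e_5,e_5)\neq 0$ forces $2B(e_5,e_5)\neq 0$; you would then conclude that no metric infinitesimal deformation exists, contradicting the statement. Second, the proposed obstruction analysis ``in the metric subcomplex'' is heavier than what is needed and is not where the actual difficulty sits. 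For $\phi(e_5,e_5)=e_5$ the order-$t^2$ defect of the Leibniz identity is concentrated at $(e_5,e_5,e_5)$ and equals $t^2e_5\neq 0$; since $e_5$ is central and $[e_5,e_5]=0$ in $\mathfrak{d}\oplus\C$, every coboundary $\delta\phi_2$ vanishes when evaluated at $(e_5,e_5,e_5)$, so no second-order correction $\phi_2$ (metric or not) can cancel the defect. This elementary computation, together with the paper's Proposition~\ref{only at the inf level} on cocycles of the form $\phi(e_i,e_i)=e_i$, is the whole non-extension argument. Your plan neither identifies the relevant cocycle (and with the sign error would exclude it) nor supplies this obstruction computation, so as it stands it does not establish the final claim.
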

  \begin{proof} 
 The Lie algebra $ \mathfrak{d}\oplus\C$ has $13$ non-equivalent Leibniz $2$-cocycles. This set consists of $4$ skewsymmetric cocycles which already appeared in the four dimensional computation for diamond algebra. This will provide  Leibniz  deformations to $\sl(2)\oplus\C^2$ and $\mathfrak{L}_2\oplus C$. 
 
 Among the $9$ non-skewsymmetric cocycles,  only one defines an algebra with invariant inner product. 
 It is in fact a family, defined by the same nontrivial brackets as the diamond algebra, plus an additional nonzero bracket
 $$[e_5,e_5]=te_5,$$
 where $t \neq 0 $ and $e_5$ denotes the additional basis vector to the basis of diamond Lie algebra.
 
  
This family is metric, but does not define a Leibniz algebra, only at the infinitesimal level. In the Leibniz identity expression higher order terms show up:
$$
[e_5,[e_5,e_5]_t]_t - [[e_5,e_5]_t,e_5]_t - [e_5,[e_5,e_5]_t]_t = -t^2e_5,
$$
so the Leibniz identity is not satisfied already at the second level (it has an obstruction of order 2 because the $t^2$ term does not vanish). Remark that even if we change the 2-cochain to eliminate the $t^2$ term, the obtained higher order deformation will not be metric. So this does not give a real metric deformation. On the infinitesimal level, these metric algebras are isomorphic: if $t_1 \neq t_2$, with the bais change $e_5'=ce_5$ where $c^3=\frac{t_1}{t_2}$ we get an isomorphism.

 \end{proof}
 
 \rem
 Here again we have a metric cocycle which does not define a Leibniz algebra, because in the deformed algebra at the $t^2$ term does not satisfy the Leibniz identity (see Remark 8).

 \begin{thm}
 The nilpotent Lie algebra $W_3$ deforms in two different ways to the Lie algebra $\mathfrak{d}\oplus\C$, and also has a deformation to $\sl(2,\C)\oplus\C$. Moreover, it has two isomorphic metric nilpotent Leibniz algebra deformations.
 One of them is $\widetilde{W_3}$ with the nonzero brackets for $W_3$, and an extra nontrivial Leibniz bracket $[e_5,e_5]=e_2$, the other one is $\widetilde{W_3}'$ with the nonzero brackets for $W_3$ and a nonzero extra bracket $[e_4,e_4]=e_1$. Here $\widetilde{W_3}$ is isomorphic to 
 $\widetilde{W_3}'$.
 \end{thm}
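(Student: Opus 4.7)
My plan is to begin by explicitly computing the Leibniz 2-cohomology $HL^2(W_3;W_3)$ using the Loday--Pirashvili coboundary $\delta$ described in Section 2. Writing a general 2-cochain in coordinates as $\phi(e_i,e_j) = \sum_k c^k_{ij} e_k$, the condition $\delta\phi = 0$ becomes a finite linear system on the $c^k_{ij}$; after quotienting by the image of $\delta^1$ on $\hom(W_3,W_3)$ I obtain a basis of inequivalent cocycle classes. I would then discard all classes of type $\phi(e_i,e_i)=e_i$ in view of Proposition~\ref{only at the inf level} and Remark~8, and for every remaining class $\phi$ attempt to lift $\mu_0+t\phi$ to a full deformation by solving the Maurer--Cartan obstruction equations order by order.

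For the Lie-algebra targets I would search among the skew-symmetric classes. Guided by the known structures of $\mathfrak{d}\oplus\C$ and $\sl(2,\C)\oplus\C^2$, I expect to find two inequivalent cocycles whose deformed brackets are both isomorphic to $\mathfrak{d}\oplus\C$ (hence ``two different ways'') and one further cocycle giving $\sl(2,\C)\oplus\C^2$. In each case the explicit isomorphism is obtained by a $t$-dependent change of basis involving $\sqrt{t}$ or $1/t$ factors, in direct analogy with the $4$-dimensional computations of Section~3; the candidate is then verified by checking a short finite list of bracket identities.

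For the metric Leibniz part, the two cocycles to analyze are $\phi(e_5,e_5)=e_2$ and $\phi'(e_4,e_4)=e_1$. For each I would verify three things: that $\phi$ is indeed a Leibniz $2$-cocycle; that the new bracket $\mu_0+t\phi$ satisfies both the left and the right Leibniz identities on the nose with no $t^2$ obstruction, so we genuinely obtain a real (not merely infinitesimal) deformation; and that the original matrix $B_{W_3}$ remains invariant for the deformed bracket, so that $\widetilde{W_3}$ and $\widetilde{W_3}'$ are honest metric nilpotent Leibniz algebras. The invariance check reduces to testing $B([e_i,e_j],e_k) = B(e_i,[e_j,e_k])$ only on those triples involving $e_4$ or $e_5$ twice, since the $W_3$-part of the bracket already respects $B_{W_3}$.

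To establish the isomorphism $\widetilde{W_3}\cong\widetilde{W_3}'$ I would propose the explicit linear map
\[
f(e_1) = -e_2,\quad f(e_2) = e_1,\quad f(e_3) = e_3,\quad f(e_4) = e_5,\quad f(e_5) = -e_4,
\]
and verify directly that $f([x,y]_{\widetilde{W_3}}) = [f(x),f(y)]_{\widetilde{W_3}'}$ on every pair of basis vectors. The main obstacle is managing the size of $HL^2(W_3;W_3)$ and, within it, ruling out that some linear combination of $\phi$, $\phi'$ and other non-skew cocycles could yield a further inequivalent metric deformation; this requires a systematic scan of combinations, in each case either exhibiting an invariant bilinear form or pointing out the failure of the symmetric Leibniz identity at order $t$ or $t^2$.
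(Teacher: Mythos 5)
Your plan follows the same route as the paper: compute a basis of $HL^2(W_3;W_3)$, locate among the skew-symmetric classes the two cocycles ($\phi_1(e_3,e_5)=e_4=-\phi_1(e_5,e_3)$ and $\phi_2(e_3,e_4)=e_5=-\phi_2(e_4,e_3)$) deforming to $\mathfrak{d}\oplus\C$ and the one deforming to $\sl(2,\C)\oplus\C^2$, and among the non-skew ones the two cocycles $(e_4,e_4)\mapsto e_1$ and $(e_5,e_5)\mapsto e_2$ giving $\widetilde{W_3}'$ and $\widetilde{W_3}$ — though as written it defers the actual cohomology computation and the explicit $t$-dependent base changes, which constitute the bulk of the paper's proof. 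Your one concrete addition, the map $f(e_1)=-e_2$, $f(e_2)=e_1$, $f(e_3)=e_3$, $f(e_4)=e_5$, $f(e_5)=-e_4$, does check out on all basis brackets and supplies the isomorphism $\widetilde{W_3}\cong\widetilde{W_3}'$ that the paper asserts without exhibiting.
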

 \begin{proof}
 The  second Leibniz cohomology space has $17$ non-equivalent cocycles in a basis. This set consists of $9$ skewsymmetric cocycles and $8$ non-skewsymmetric cocycles.
 The  following $3$ cocycles  give metric Lie algebras:
\begin{enumerate}
\item $ \phi_1(e_3,e_5)=e_4, \phi_1(e_5,e_3)=-e_4$;
 \item $\phi_2(e_3,e_4)=e_5,  \phi_2(e_4,e_3)=-e_5$;
 \item
$ \phi_3(e_2,e_3 )= e_2,  \phi_3(e_3,e_2)=-e_2,
 \phi_3(e_2,e_5) = -e_3,  \phi_3(e_5,e_2)= e_3, \\
 \phi_3(e_3,e_5)=e_5,  \phi_3(e_5,e_3)= -e_5.$
 \end{enumerate}
 

The deformation using the cocycle $\phi_1$ has a Leibniz algebra isomorphism with the Lie algebra $\mathfrak{d}\oplus\C$ via the transformation
  \begin{equation*}
  \begin{split}
 &e_1'=e_3+e_4+\frac{i}{\sqrt{t}}e_5, \\
 & e_2'= \frac{1}{t}e_1- \frac{ i} {\sqrt{t}}e_2+e_4, \\
 & e_3'=e_2- \frac{i}{\sqrt{t}}e_3, \\
 & e_4'=\frac{i}{\sqrt{t}}e_2~~\mbox{and}\\
 &e_5'~\mbox{is the new basis element commuting with the other basis vectors}.
  \end{split}
 \end{equation*}
 The cocycle $\phi_2$ also gives a deformation isomorphic to $\mathfrak{d}\oplus\C$  via the change of basis given below.
  \begin{equation*}
  \begin{split}
& e^{\prime}_1= e_3+ \frac{1}{\sqrt{t}}e_4 -e_5\\
&e^{\prime}_2= e_2+e_3+te_5\\
&e^{\prime}_3= (t+1)e_1 -\frac{1}{\sqrt{t}}e_2 + \sqrt{t}e_3 -\sqrt{t}e_5\\
&e^\prime_4=- \sqrt{t}(1+t)e_1~\mbox{and}\\
 &e_5'~\mbox{is the new basis element commuting with the other basis vectors}.
   \end{split}
 \end{equation*}
 The cocycle $\phi_3$ gives  gives a deformation isomorphic to $sl(2,\C)\oplus\C^2$
  via the change of basis given below.
  \begin{equation*}
  \begin{split}
& e^{\prime}_1= e_1 -e_2- \frac{2}{t}\lambda e_3 +t e_5\\
&e^{\prime}_2= \frac{-1}{2t^3}( e_1- e_2+ \frac{2}{t\lambda}e_3 +te_5)\\
&e^{\prime}_3= \lambda (e_1+e_2+ t e_5)\\
&e^\prime_4= e_4~\mbox{and}~~e_5'= e_1, ~~\mbox{where}~~\lambda =\sqrt{\frac{2}{t^3}}.
   \end{split}
 \end{equation*}
  
In the set of $8$  non-skewsymmetric cocycles, there are the following two cocycles which  give isomorphic metric Leibniz algebras.
\begin{enumerate}
\item $\phi_1(e_4,e_4)= e_1  $;
\item $\phi_2(e_5,e_5)= e_2  $.

These define the nilpotent metric Leibniz algebra $\widetilde{W_3}$.
\end{enumerate}
\end{proof}

\dfn{\bf{The new metric Leibniz algebra $\widetilde{W_3}$}}

Consider the Leibniz algebra given by a parametric family $$ \widetilde{W_3}(t) =[-,-]_{W_3} + t\phi_2$$
where $\phi_2$ denotes the $2$-cocycle $\phi_2(e_5,e_5)= e_2  $ obtained in the second Leibniz cohomology space of $W_3$. 
The algebras $ \widetilde{W_3}(t)$ are isomorphic for any $t \neq 0$, so it is just one nilpotent Leibniz algebra. Let we have two such algebras for $t_1$ and $t_2$, then an isomorphism
 $$\widetilde{W_3}(t_1) =[-,-]_{W_3} + t_1\phi_2 \longrightarrow \widetilde{W_3}(t_2) =[-,-]_{W_3} + t_2\phi_2$$
given by a basis change $e^\prime_1=c e_1, e^\prime_2= 1/c e_2, e^\prime_3= e_3, e^\prime_4=1/c e_4, e^\prime_5= c e_5$, where $c$ is a scalar such that $c^2 = t_2/t_1$.

We consider $ \widetilde{W_3} =  \widetilde{W_3}(1)$  with a simple invariant inner product
  $$
  B_{ \widetilde{W_3}}
  =\begin{pmatrix}
  0&0&0&-1&0\\
  0&0&0&0&1\\
  0&0&1&0&0\\
  -1&0&0&1&0\\
  0&1&0&0&1
  \end{pmatrix}.
  $$
Other than these, we do not get new metric Leibniz algebras with deformations.

   \begin{thm}
 The five dimensional nilpotent metric Leibniz algebra $ \widetilde{W_3}$ has two deformations to metric Leibniz algebras, with one being isomorphic to  the Leibniz algebra $\mathfrak{L}_2 \oplus\C$, the other one is the nilpotent Leibniz algebra $\widetilde{W_3}^*$ with the original nonzero brackets of $\widetilde{W_3}$ and the extra nonzero bracket $[e_4,e_4]=e_1$.
   \end{thm}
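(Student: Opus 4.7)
The plan is to follow the same strategy used for the earlier theorems in Sections~3 and~4: compute the second Leibniz cohomology $HL^2(\widetilde{W_3}; \widetilde{W_3})$, pick a basis of non-equivalent $2$-cocycles, and then for each cocycle (and linear combinations) decide whether the deformed bracket $[-,-]_t=[-,-]_{\widetilde{W_3}}+t\phi$ extends to an honest formal deformation whose product admits an invariant, non-degenerate, symmetric bilinear form.

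First I would write a general bilinear map $\phi:\widetilde{W_3}\otimes\widetilde{W_3}\to\widetilde{W_3}$ as a $5\times 25$ matrix of unknowns and impose the cocycle condition $\delta\phi=0$ via the explicit formula recalled in Section~2; quotienting by the coboundaries $\delta\psi$ for $\psi\in CL^1$ gives $HL^2$. The computation closely parallels the one already carried out for $W_3$, the only difference being that the extra bracket $[e_5,e_5]=e_2$ produces additional cocycle constraints and alters the boundary space, so I expect the dimension of $HL^2$ to be comparable in size to the $17$ found for $W_3$.

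Next, for each representative $\phi_i$ (and, as has already happened several times in the paper, for suitable linear combinations), I would test the Leibniz identity of $[-,-]_t$ order by order in $t$, discarding any cocycle whose higher order obstruction cannot be killed by a change of $2$-cochain. Following Remark~8 and Proposition~\ref{only at the inf level}, any $\phi$ of the form $\phi(e_i,e_i)=e_i$ is excluded a priori. A useful shortcut, already employed in Proposition in dimension~$3$, is Proposition~(i): a metric Leibniz algebra must be symmetric, so a candidate failing either the left or the right Leibniz identity can be immediately discarded. This narrows the list to two metric Leibniz deformations.

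For the first one, I would exhibit an explicit basis change of the type $e_i'=\sum_j a_{ij}(t)e_j$ with coefficients involving $\sqrt{t}$ (in the spirit of the transformations used for $\mathfrak{d}\to\mathfrak{L}_2$ and $\mathfrak{R}_{20}(0)\to\mu_1\oplus\mu_1$) identifying the deformation with the standard presentation of $\mathfrak{L}_2\oplus\C$; the extra central copy of $\C$ will be whichever generator is left untouched by both $[-,-]_{\widetilde{W_3}}$ and $\phi$. For the second, I would add the single non-trivial bracket $[e_4,e_4]=e_1$, check directly that (a) the Leibniz identity is satisfied to all orders (there are no non-trivial compositions to produce obstructions), (b) the matrix $B_{\widetilde{W_3}}$ (or a rescaling) remains invariant under the new bracket, and (c) the resulting algebra $\widetilde{W_3}^*$ remains nilpotent; comparing the lower central series and the Leibniz kernel with the list in Section~4 and with \cite{D} confirms that it is a genuinely new metric Leibniz algebra.

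The main obstacle will be the cohomology computation together with the obstruction analysis: $HL^2(\widetilde{W_3};\widetilde{W_3})$ is large, and for every cocycle that is not manifestly symmetric one must either produce a formal deformation or exhibit a second-order Massey obstruction that cannot be absorbed into $\phi_2$. Making sure that no non-obvious linear combination of cocycles produces a third inequivalent metric deformation is the point that requires the most careful case analysis, and is the reason the theorem is stated with \emph{exactly} two deformations.
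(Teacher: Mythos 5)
Your strategy is exactly the one the paper follows (compute a basis of $HL^2(\widetilde{W_3};\widetilde{W_3})$, test each representative and their linear combinations for the metric property and for higher-order obstructions, then exhibit explicit $t$-dependent basis changes), so there is no disagreement of method. The problem is that your proposal stops at the level of a plan: for a theorem whose entire content is a finite computation, the proof \emph{is} the computation, and every substantive step is deferred. Concretely, you do not produce the basis of the second cohomology (the paper lists $11$ non-equivalent cocycles $\psi_1,\dots,\psi_{11}$, not a dimension ``comparable to the $17$ found for $W_3$''); you do not verify that exactly $\psi_1$ and $\psi_2$ are the ones yielding metric Leibniz algebras and that no linear combination yields a third; and you do not exhibit the isomorphism of $\widetilde{W_3}+t\psi_1$ with $\mathfrak{L}_2\oplus\C$, which is the one genuinely non-obvious piece of the argument. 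You yourself flag these as ``the main obstacle,'' which is an accurate self-assessment: as written, the statement ``this narrows the list to two metric Leibniz deformations'' is asserted, not derived.

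Two smaller points. First, your heuristic that the central $\C$ in $\mathfrak{L}_2\oplus\C$ ``will be whichever generator is left untouched by both brackets'' does not survive contact with the actual isomorphism: in the paper's change of basis every original generator appears nontrivially (e.g.\ $e_5'=e_1+se_4$), because the two-dimensional centre of the deformed algebra is not spanned by coordinate vectors. Second, for the $\psi_2$ branch your outline is essentially complete and correct --- adding only $[e_4,e_4]=e_1$ creates no new nonzero iterated brackets, so the Leibniz identity holds on the nose and one only needs to check invariance of a suitable inner product (the paper records $B_{\widetilde{W_3}^*}$ explicitly) and non-isomorphism with the algebras already on the list. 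So the gap is concentrated entirely in the cohomology computation and the exhaustive case analysis, which must actually be carried out for the theorem's ``exactly two'' to be justified.
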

 \begin{proof}
The  second Leibniz cohomology space of $ \widetilde{W_3}$ has the following $11$ non-equivalent cocycles in a basis. 
\begin{enumerate}
\item $\psi_1:(e_1, e_3)\mapsto e_2, (e_3, e_1)\mapsto - e_2, (e_1, e_5)\mapsto -e_3; (e_5, e_1)\mapsto e_3.$
\item  $\psi_2: (e_4,e_4) \mapsto e_1$.
\item $\psi_3:$ \vspace{-0.55cm}
 \begin{equation*}
  \begin{split}
   &(e_1,e_3 ) \mapsto  e_1,   (e_3,e_1 ) \mapsto  -e_1, (e_3,e_5 ) \mapsto   e_5, (e_5,e_3 ) \mapsto  - e_5,\\
  &(e_5, e_1) \mapsto -2  e_2,  (e_1,e_4 ) \mapsto  e_3, (e_4,e_1 ) \mapsto  e_3.
    \end{split}
 \end{equation*}
  \item $\psi_4:
 (e_1,e_4 ) \mapsto  e_1,   (e_4,e_1 ) \mapsto  -e_1, (e_3,e_4 ) \mapsto   e_3, (e_4,e_3 ) \mapsto  - e_3.   $
\item $\psi_5:
 (e_1,e_4 ) \mapsto  e_2,   (e_4,e_1 ) \mapsto  -e_2, (e_4,e_5 ) \mapsto   -e_4, (e_5,e_4 ) \mapsto  e_4.   $
\item $\psi_6:$ \vspace{-0.55cm}
 \begin{equation*}
  \begin{split}
  & (e_2,e_3 ) \mapsto  e_1,   (e_3,e_2 ) \mapsto  -e_1, (e_2,e_4 ) \mapsto   e_3, (e_4,e_2 ) \mapsto  - e_3,\\
 & (e_2, e_5) \mapsto   e_2,  (e_5,e_2 ) \mapsto  -e_2, (e_4,e_5 ) \mapsto  e_4,  (e_5,e_4 ) \mapsto  -te_4\\
 &(e_5, e_5) \mapsto e_5.
    \end{split}
 \end{equation*}
 \item $\psi_7:$ \vspace{-0.55cm}
 \begin{equation*}
  \begin{split}
  & (e_2,e_3 ) \mapsto  e_2,   (e_3,e_2 ) \mapsto  -e_2,   (e_2, e_5) \mapsto -e_3,  (e_5,e_2 ) \mapsto  e_3,\\
  & (e_3,e_5 ) \mapsto  e_5,  (e_5,e_3 ) \mapsto  -e_5 , (e_5, e_5) \mapsto te_4.
    \end{split}
 \end{equation*}
 \item $\psi_8:$ \vspace{-0.55cm}
 \begin{equation*}
  \begin{split}
  & (e_3,e_3 ) \mapsto  e_1,  (e_2, e_5) \mapsto 2e_1, (e_4,e_1 ) \mapsto  -2e_1,\\
  &  (e_3,e_4 ) \mapsto  -3e_3 , (e_4, e_3) \mapsto 3e_3.
    \end{split}
 \end{equation*}
 \item $\psi_9:$\vspace{-0.55cm}
 \begin{equation*}
  \begin{split}
  & (e_3,e_3 ) \mapsto  e_2,  (e_2, e_5) \mapsto 2e_2, (e_4,e_1 ) \mapsto  -2e_2,\\
  &  (e_4,e_5 ) \mapsto  e_4 , (e_5, e_4) \mapsto -e_4.
    \end{split}
 \end{equation*}
\item $\psi_{10}: (e_4,e_4)\mapsto e_2.$
  \item $\psi_{11}: (e_4,e_5) \mapsto e_1. $
\end{enumerate}
 Here only the cocycles  $\psi_1$ and $\psi_2$ give metric Lie algebras, and no combination gives any new metric deformation.
 
The Leibniz algebra
 $$\widetilde{W_3}(t)
  = \widetilde{W_3}+ t \psi_1$$ is a deformation $ \widetilde{W_3}$ obtained  from the cocycle $\psi_1$, is isomorphic to $\mathfrak{L}_2 \oplus\C$
  via the change of basis given below.
 \begin{equation*}
  \begin{split}
& e^{\prime}_1= 
e_1+(i\sqrt{s}(s-1)+s)e_2 -i\sqrt{s}~ e_3\\
 &e^{\prime}_2= e_1+(i\sqrt{s}(s-1) - s)e_2  + i\sqrt{s} ~e_3\\
 &e^{\prime}_3=  e_2\\
 &e^{\prime}_4= e_1 +e_3+e_4+ e_5 \\  
&e^\prime_5= e_1+ s e_4.
   \end{split}
 \end{equation*}

The Leibniz algebra
 $$ \widetilde{W_3}^*(t)
 =  \widetilde{W_3} + t \psi_2$$ 
 is a deformation of $ \widetilde{W_3}$ obtained with the cocycle $\psi_2$.
\end{proof}

 \dfn{\bf{The new metric Leibniz algebra $ \widetilde{W_3}^*$}}
 
The algebras $ \widetilde{W_3}^*(t)
 =  \widetilde{W_3} + t \psi_2$ are isomorphic for any $t \neq 0$, so it is just one nilpotent Leibniz algebra. 
 

We denote $ \widetilde{W_3}^*=  \widetilde{W_3}^*(1)$  with a simple invariant inner product
  $$
  B_{ \widetilde{W_3^*}}
  =\begin{pmatrix}
  0&0&1&0&0\\
  0&0&0&1&0\\
  1&0&0&0&1\\
  0&1&0&0&0\\
  0&0&1&0&1
  \end{pmatrix}.
  $$
\begin{thm}
The Leibniz algebra $ \widetilde{W_3}^*$ has no metric deformation.
\end{thm}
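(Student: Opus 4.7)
The plan is to follow the same recipe used throughout this section. First I would compute the Leibniz $2$-cohomology $HL^2(\widetilde{W_3}^*;\widetilde{W_3}^*)$ using the structure constants of $\widetilde{W_3}^*$ (those of $W_3$ together with $[e_5,e_5]=e_2$ and $[e_4,e_4]=e_1$) and the Loday--Pirashvili coboundary $\delta$ recalled in Section~2. By Proposition~\ref{only at the inf level} any cocycle of the form $\phi(e_i,e_i)=e_i$ is discarded immediately, as it can produce a metric structure only at the infinitesimal level and never extends to a genuine deformation.

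Next I would list a basis of the remaining non-equivalent $2$-cocycles. For each representative $\phi$, and then for arbitrary linear combinations $\sum a_i\phi_i$, form the one-parameter family
\[
\mu_t(x,y) = [x,y]_{\widetilde{W_3}^*} + t\,\phi(x,y)
\]
and test two conditions in turn. The first, cheapest, filter is symmetry: by Proposition~(i), a metric Leibniz algebra must be symmetric, so any $\mu_t$ whose bracket fails both the left and right Leibniz identities, or fails to be symmetric in the sense of the first definition, is rejected. For the surviving candidates, look for a symmetric matrix $B$ solving the invariance system
\[
B(\mu_t(e_i,e_j),e_k) = B(e_i,\mu_t(e_j,e_k))
\]
over all basis triples, and check whether any such $B$ can be non-degenerate. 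In parallel, one must verify that $\mu_t$ satisfies the Leibniz identity to all orders in $t$ (to rule out obstructed candidates that are only infinitesimal deformations).

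The main obstacle is the combinatorial size of the calculation. The cochain space $CL^2(\widetilde{W_3}^*;\widetilde{W_3}^*)$ has dimension $125$, and the preceding computation for $\widetilde{W_3}$ already produced eleven cohomology classes, so a similar or larger list should be expected here. The arithmetic is routine but heavy, and must be carried out on the full parameter space $\{a_i\}$ rather than cocycle-by-cocycle, since a metric deformation may appear as a mixed combination even when no individual $\phi_i$ gives one. The claim is that, after applying the symmetry filter and the obstruction check, the invariance equations force $\det B(t)=0$ identically in $t$, ruling out any non-degenerate invariant form. I would carry out this verification with a computer algebra system, using the structure constants listed above as input.
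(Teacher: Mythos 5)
Your plan is methodologically the same as the paper's: compute $HL^2(\widetilde{W_3}^*;\widetilde{W_3}^*)$, take a basis of representative $2$-cocycles, discard the ones ruled out by Proposition~\ref{only at the inf level}, and then test symmetry, invariance and the higher-order Leibniz identity for each representative and for arbitrary linear combinations. You are also right that the check must be done on the whole parameter space $\{a_i\}$ and not cocycle by cocycle. So there is no disagreement about the route.

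The gap is that you never actually carry out the computation, and for a statement of this kind the computation \emph{is} the proof. The paper's argument consists precisely of exhibiting the basis of the second cohomology --- it turns out to be $5$ non-equivalent cocycles, not the ``eleven or more'' you anticipate by analogy with $\widetilde{W_3}$ --- and then verifying that none of them, nor any combination, admits a non-degenerate invariant symmetric form. Your text replaces that verification with the sentence ``I would carry out this verification with a computer algebra system,'' which leaves the conclusion unestablished: nothing in your write-up rules out the possibility that one of the actual cocycles (or a combination) does produce a metric Leibniz algebra. To close the gap you need to (i) produce the explicit list of cocycle representatives from the structure constants of $\widetilde{W_3}^*$ (those of $W_3$ together with $[e_5,e_5]=e_2$ and $[e_4,e_4]=e_1$), and (ii) record, for each candidate surviving the symmetry filter, why the invariance system forces $\det B=0$ or why the deformation is obstructed. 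As it stands, your proposal is a correct protocol but not a proof.
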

\begin{proof}
The 2-dimensional Leibniz cohomology space has $5$ non-equivalent cocycles in a basis given below, but no metric structure shows up with those cocycles or with their linear combination.
\begin{enumerate}
\item $\phi_1:$  \vspace{-0.15cm}
 \begin{equation*}
  \begin{split}
  & (e_1,e_5 )  \mapsto  -2 e_2,  (e_2,e_3) \mapsto e_2, (e_3,e_2) \mapsto -e_2,  (e_2,e_4) \mapsto t e_1,  \\
&   (e_4,e_2) \mapsto -t e_1, (e_2,e_5) \mapsto -e_3,  (e_5,e_2) \mapsto e_3,  (e_3,e_4) \mapsto -2t  e_2, \\
&  (e_3,e_5) \mapsto e_5,  (e_5,e_3) \mapsto -t  e_1- e_5, (e_5,e_5) \mapsto e_4.
   \end{split}
 \end{equation*}
\item $\phi_2:$  \vspace{-0.15cm}
\begin{equation*}
  \begin{split}
  & (e_2,e_3 )  \mapsto e_1, (e_3,e_2)  \mapsto -e_1,  (e_2,e_4)  \mapsto e_3, (e_4,e_2)  \mapsto - e_3,  \\
&  (e_2,e_5)  \mapsto  e_2, (e_5,e_2)  \mapsto -e_2,  (e_3,e_4)  \mapsto -2t e_1e_3, (e_4,e_5) \mapsto e_4, \\
&  (e_5,e_4)  \mapsto - e_4,  (e_5,e_5) \mapsto e_5.
   \end{split}
 \end{equation*}
 \item $\phi_3:$ \vspace{-0.25cm}
 \begin{equation*}
  \begin{split}
  & (e_1,e_3 ) \mapsto  e_2,   (e_3,e_1 ) \mapsto  -e_2, (e_1,e_4 ) \mapsto  t e_1, (e_4,e_1 ) \mapsto  -te_1\\
 & (e_1,e_5 ) \mapsto  -e_3,   (e_5,e_1 ) \mapsto  e_3, (e_4,e_4 ) \mapsto  t e_4, (e_4,e_5 ) \mapsto  -te_5\\
 & (e_5,e_4 ) \mapsto  te_5,   (e_5,e_3 ) \mapsto  -t  e_1 +2 t  e_2.
    \end{split}
 \end{equation*}
  \item $\phi_4:$ \vspace{-0.5cm}
 \begin{equation*}
  \begin{split}
  & (e_1,e_4 ) \mapsto  -2e_1,   (e_2,e_5 ) \mapsto  2e_1, (e_3,e_3 ) \mapsto   e_1, \\
 & (e_4,e_5 ) \mapsto  3e_5, (e_5,e_4 ) \mapsto  -3e_5.
    \end{split}
 \end{equation*}
  \item $\phi_5:$ \vspace{-0.5cm}
 \begin{equation*}
  \begin{split}
  & (e_1,e_4 ) \mapsto  2e_2,   (e_2,e_5 ) \mapsto  2e_2, (e_3,e_3 ) \mapsto   e_2, \\
 & (e_3, e_4) \mapsto 2t e_1,  (e_4,e_5 ) \mapsto  -e_4, (e_5,e_4 ) \mapsto  e_4.
    \end{split}
 \end{equation*}
 \end{enumerate}
\end{proof}
\begin{thm}
The Leibniz algebra $\mathfrak{L}_2\oplus\C$ has no metric deformation.
\end{thm}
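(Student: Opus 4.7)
The approach follows the template used for every other rigidity statement in this section: compute $HL^2(\mathfrak{L}_2\oplus\C;\mathfrak{L}_2\oplus\C)$, extract a basis of non-equivalent Leibniz $2$-cocycles, and then show that no cocycle, and no linear combination of cocycles, yields a deformed bracket supporting a non-degenerate symmetric invariant bilinear form.

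First I would fix a working basis, taking the generators $e_1,e_2,e_3,e_4$ of $\mathfrak{L}_2$ from Section~3 and letting $e_5$ span the central $\C$-summand. A generic $2$-cochain has $125$ structure constants; the cocycle condition $\delta^2\phi=0$ given by the Loday--Pirashvili differential is a linear system that I would assemble triple by triple, while the coboundary space is the image under $\delta^1$ of the $25$-dimensional space of endomorphisms of $\mathfrak{L}_2\oplus\C$. Since $HL^2(\lambda_2\oplus\C)$ already had dimension $27$, I expect $HL^2(\mathfrak{L}_2\oplus\C;\mathfrak{L}_2\oplus\C)$ to be of comparable size, with most new directions coming from the interaction of the central generator $e_5$ with the existing brackets of $\mathfrak{L}_2$ and from a possible $[e_5,e_5]$ product.

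Once an explicit basis $\{\phi_1,\dots,\phi_k\}$ is in hand, I would apply the reductions already used earlier in the paper: by Proposition~\ref{only at the inf level} and the convention adopted immediately after it, any cocycle of the form $\phi(e_i,e_i)=e_i$ may be discarded at once, and any cocycle for which the deformed bracket $[-,-]_t=[-,-]_{\mathfrak{L}_2\oplus\C}+t\phi$ fails either the left or the right Leibniz identity is ruled out because a metric Leibniz algebra must be symmetric. For each surviving $\phi$ I would then search for a symmetric non-degenerate matrix $B(t)=B_0+tB_1+\cdots$ satisfying $B([x,y]_t,z)=B(x,[y,z]_t)$ on basis triples; this reduces order by order in $t$ to a linear system in the entries of the $B_i$, whose solvability with $B_0$ equal to the given invariant form on $\mathfrak{L}_2\oplus\C$ is what needs to be falsified.

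The main obstacle is the passage from individual cocycles to arbitrary linear combinations: in principle a metric structure could appear for some $\phi=\sum c_i\phi_i$ even when none of the $\phi_i$ alone produces one. I would handle this by parametrizing the generic combination, imposing the symmetric Leibniz identity at each order in $t$, and then attempting to solve the resulting invariance system for $B$ in the parameters $c_i$. By analogy with the earlier cases $\sl(2,\C)\oplus\C$, $\mathfrak{L}_2$ itself, and $\mu_1\oplus\mu_1$ — each of which admits many cocycles but no metric deformation — I expect every candidate to fail in one of three ways: by losing symmetry, by acquiring a second-order obstruction of the type exhibited in Proposition~\ref{only at the inf level}, or by admitting only degenerate invariant bilinear forms, thereby establishing the claim.
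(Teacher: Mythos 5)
Your approach --- computing $HL^2(\mathfrak{L}_2\oplus\C;\mathfrak{L}_2\oplus\C)$, extracting a basis of non-equivalent Leibniz $2$-cocycles, and checking that neither any single cocycle nor any linear combination yields a deformed bracket admitting an invariant non-degenerate symmetric form --- is exactly the paper's method; the paper's proof simply reports the outcome of that computation, namely that the cohomology space has $8$ non-equivalent cocycles and no metric structure shows up. The only discrepancy is your size estimate (you expected something comparable to the $27$ of $\lambda_2\oplus\C$, whereas the actual dimension is $8$), but this does not affect the strategy or its validity.
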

\begin{proof}
The 2-dimensional Leibniz cohomology space has $8$ non-equivalent cocycles in a basis, but no metric structure shows up with those cocycles.
\end{proof}
 \begin{thm}
 The Leibniz algebra $\mathfrak{R}_{20}(0)\oplus \C$ has one metric deformation to $\mu_1 \oplus \mu_1 \oplus \C$.
  \end{thm}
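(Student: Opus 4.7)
The plan is to follow essentially the same pattern that worked for $\mathfrak{R}_{20}(0)$ alone in the four-dimensional case, adapted to accommodate the extra trivial summand $\C$. Write the algebra with basis $e_1,\dots,e_5$ where $[e_1,e_2]=[e_2,e_1]=e_4$ and $[e_2,e_2]=e_3$ come from $\mathfrak{R}_{20}(0)$ and $e_5$ spans the central $\C$ factor. The first step is to compute $HL^2(\mathfrak{R}_{20}(0)\oplus\C;\mathfrak{R}_{20}(0)\oplus\C)$ directly from the definition of $\delta^n$ given in the preliminaries: write a general bilinear map $\phi:\L\otimes\L\to\L$ in the 25 basis components $\phi(e_i,e_j)$, impose the cocycle equation $\delta^2\phi=0$ (21 linear equations), and quotient by the image of $\delta^1$. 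I expect the dimension to be substantially larger than the value $9$ found for $\mathfrak{R}_{20}(0)$, due to the extra degrees of freedom involving $e_5$.

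Next, I would go through the representative cocycles and, for each one (and each promising linear combination), form the candidate bracket $\mu_t=\mu_0+t\phi$ and test three conditions in order: (i) whether $\mu_t$ actually satisfies the Leibniz identity to all orders (not just infinitesimally), (ii) whether $\mu_t$ is symmetric in the sense of the proposition at the end of Section~2 (so that a metric can exist at all), and (iii) whether there is a non-degenerate invariant symmetric bilinear form. By Remark~8 and Proposition~\ref{only at the inf level}, cocycles of the form $\phi(e_i,e_i)=e_i$ can be discarded immediately, which should trim the list considerably. To keep the bookkeeping manageable, I would check (ii) first in most cases since asymmetry is fast to spot.

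The natural candidate is the lift of the cocycle $\phi(e_1,e_1)=e_3$ that produced $\mu_1\oplus\mu_1$ from $\mathfrak{R}_{20}(0)$ in the four-dimensional theorem. With the trivial $e_5$ direction left alone, the same basis change works: set
\[
e_1'=e_1+\sqrt{t}\,e_2,\ \
e_2'=2\sqrt{t}(\sqrt{t}\,e_3+e_4),\ \
e_3'=e_1-\sqrt{t}\,e_2,\ \
e_4'=2\sqrt{t}(\sqrt{t}\,e_3-e_4),\ \
e_5'=e_5,
\]
and verify directly that this identifies $\mu_0+t\phi$ with the bracket of $\mu_1\oplus\mu_1\oplus\C$. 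This simultaneously shows the deformation is nontrivial, has no higher order obstruction, and lands in a metric algebra (the inner product on $\mu_1\oplus\mu_1\oplus\C$ pulls back to an invariant non-degenerate symmetric form on the deformed algebra).

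The main obstacle I expect is the exhaustive checking at step two: ruling out all other cocycles and their linear combinations as metric deformations. The extra basis vector $e_5$ couples to many of the cocycles already seen in Section~3 and can produce genuinely new cocycles mixing $e_5$ with $e_1,\dots,e_4$; each such class needs either a quick symmetry obstruction, a higher-order obstruction to the Leibniz identity, or an explicit check that no invariant non-degenerate symmetric form exists on the deformed bracket. Once that list is cleared, uniqueness of the metric deformation up to equivalence follows, completing the theorem.
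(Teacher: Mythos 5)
Your proposal follows essentially the same route as the paper: compute the second Leibniz cohomology of $\mathfrak{R}_{20}(0)\oplus\C$, single out the cocycle $\phi(e_1,e_1)=e_3$ inherited from the four-dimensional computation as the only one yielding a metric deformation, exhibit the isomorphism with $\mu_1\oplus\mu_1\oplus\C$ by the same basis change extended by $e_5'=e_5$, and exclude the remaining cocycles and their combinations. The paper's proof is exactly this (it records $30$ cocycles, $9$ from the four-dimensional case and $21$ new ones, none of the latter metric), so your plan is correct and matches it.
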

\begin{proof}
 The second cohomology space has $30$ non-equivalent cocycles. 
 This set consists of $9$ cocycles coming from cocycles already appeared in the four dimensional computation for $\mathfrak{R}_{20}(0) $. Among these only one cocycle gives metric Leibniz algebra: 
 $$
 \phi(e_1,e_1)=e_3.
 $$
 The infinitesimal deformation $\mathfrak{L}_t=[-,-] + t\phi$ is isomorphic to $\mu_1 \oplus \mu_1 \oplus \C $. 
  But no metric structure shows up with the additional $21$ cocycles or their combinations.
  \end{proof}

\begin{thm}
 The Leibniz algebra $\lambda_2\oplus \C^2$ deforms to the five dimensional Leibniz  algebra  $\mu_1\oplus\mu_1\oplus\C$ in 6 different ways, and also deforms to $\mathfrak{L}_2 \oplus \C$ and $\mathfrak{R}_{20}(0) \oplus\C$.
 \end{thm}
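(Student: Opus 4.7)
The plan is to mirror the proof of the earlier theorem covering the $4$-dimensional analog $\lambda_2\oplus\C$, exploiting the decomposition $\lambda_2\oplus\C^2=\mu_1\oplus\C^3$ in which the only nonzero bracket is $[e_1,e_1]=e_3$. First I would compute $HL^2(\lambda_2\oplus\C^2;\lambda_2\oplus\C^2)$ directly from the coboundary formula; the sparsity of the structure keeps this tractable, although the cohomology is considerably larger than in the $4$-dimensional case (whose $HL^2$ had dimension $27$).

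After invoking Proposition~\ref{only at the inf level} to discard cocycles of the form $\phi(e_i,e_i)=e_i$, I expect exactly six surviving metric-generating $2$-cocycles of the shape $\phi(e_i,e_i)=e_j$ with $i\neq j$ ranging over the three ``free'' basis directions $e_2,e_4,e_5$, namely
\[
\phi(e_2,e_2)=e_4,\ \phi(e_2,e_2)=e_5,\ \phi(e_4,e_4)=e_2,\ \phi(e_4,e_4)=e_5,\ \phi(e_5,e_5)=e_2,\ \phi(e_5,e_5)=e_4.
\]
For each such $\phi$, the diagonal rescaling that sends the target vector to $t$ times itself (in the spirit of the change of basis used in the $4$-dimensional proof) identifies the deformed bracket $[-,-]+t\phi$ with $\mu_1\oplus\mu_1\oplus\C$, the remaining free direction acting as the $\C$-summand. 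These six identifications give the six inequivalent deformations asserted in the first part of the statement.

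For the remaining two deformations I would take suitable linear combinations of the six cocycles above and apply the basis substitutions used in the $4$-dimensional proof: the substitution $e_1\leftrightarrow e_4$ yields $\mathfrak{L}_2\oplus\C$ and $e_1\leftrightarrow e_2$ yields $\mathfrak{R}_{20}(0)\oplus\C$, with the spectator $\C$-direction surviving the transformation unchanged. To complete the argument I would verify that no other single cocycle and no other linear combination produces a further metric Leibniz algebra.

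The main obstacle will be the combinatorial bookkeeping: $HL^2$ carries many more basis cocycles than the six metric-generating ones, and for every remaining cocycle (and every linear combination) one must show that the resulting algebra either fails to be Leibniz, fails to be symmetric, or admits no non-degenerate invariant form. The shortcut from the symmetry part of the earlier proposition---that a metric Leibniz algebra is necessarily symmetric---lets one eliminate most non-metric candidates quickly by testing the left and right Leibniz identities before attempting any bilinear-form computation.
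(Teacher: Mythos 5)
Your proposal follows essentially the same route as the paper: compute $HL^2$ (the paper finds $64$ basis cocycles, $27$ inherited from the $4$-dimensional case for $\lambda_2\oplus\C$), discard the $\phi(e_i,e_i)=e_i$ cocycles via Proposition~\ref{only at the inf level}, and your predicted list of six metric-generating cocycles $\phi(e_i,e_i)=e_j$ with $i\neq j$, $i,j\in\{2,4,5\}$, matches the paper's exactly (two coming from the $4$-dimensional computation and four new ones, with $\phi(e_5,e_5)=e_5$ correctly excluded). The identifications with $\mu_1\oplus\mu_1\oplus\C$ by rescaling/relabeling and the reduction of the $\mathfrak{L}_2\oplus\C$ and $\mathfrak{R}_{20}(0)\oplus\C$ cases to the $4$-dimensional argument are also how the paper proceeds.
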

  \begin{proof}
  The second cohomology space has $64$ non-equivalent cocycles. This set contains $27$ cocycles coming from cocycles already appeared in the four dimensional computation for  $\lambda_2\oplus \C$. It provides two metric deformations to the five dimensional Leibniz  algebra  $\mu_1\oplus\mu_1\oplus\C$.

From the additional $37$ cocycles, there are the following five cocycles which provide metric Leibniz structures under deformation:
  \begin{itemize}
 \item $\phi_1(e_2,e_2)=e_5$;
 \item $\phi_2(e_4,e_4)=e_5$;
 \item $\phi_3(e_5,e_5)=e_5$;
 \item $\phi_4(e_5,e_5)=e_4$;
 \item $\phi_5(e_5,e_5)=e_2$
  \end{itemize}
 The Leibniz algebra obtained from them $\phi_1$ is isomorphic to 
 $\mu_1\oplus\mu_1\oplus\C$
 and an isomorphism can be given by the transformation
with the basis change
 $$
  e_1'= e_2, \quad e_2'= t e_5,\quad e_3'= -e_1,\quad e_4' = e_3, \quad e_5' = e_4.
  $$
The Leibniz algebra obtained from them $\phi_2$ is isomorphic to 
 $\mu_1\oplus\mu_1\oplus\C$
 and an isomorphism can be given by the transformation
with the basis change
  $$
  e_1'= -e_1, \quad e_2'= e_3,\quad e_3'= e_4,\quad e_4' = te_5, \quad e_5' = e_2.
  $$
 The cocycle $\phi_3$ does not define a metric Leibniz algebra (see Proposition \ref{only at the inf level}).

 The Leibniz algebra obtained from them $\phi_4$ is isomorphic to 
 $\mu_1\oplus\mu_1\oplus\C$
 and an isomorphism can be given by the transformation
with the basis change
 $$
  e_1'= -e_1, \quad e_2'= e_3,\quad e_3'= e_5,\quad e_4' = te_4, \quad e_5' = e_2.
  $$
 The Leibniz algebra obtained from them $\phi_5$ is isomorphic to 
 $\mu_1\oplus\mu_1\oplus\C$
 and an isomorphism can be given by the transformation
with the basis change
$$
  e_1'= -e_1, \quad e_2'= e_3,\quad e_3'= e_5,\quad e_4' = te_2, \quad e_5' = e_4.
  $$
  
  The jump deformations to the metric Leibniz algebras $\mathfrak{L}_2 \oplus \C$ and to $\mathfrak{R}_{20}(0) \oplus \C$ follow similarly as in the 4-dimensional case.
 \end{proof}
\begin{thm}
The Leibniz algebra $\mu_1\oplus\mu_1\oplus\C$ has no metric deformation.
\end{thm}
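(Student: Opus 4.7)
The plan is to mimic the strategy used throughout the paper: compute the second Leibniz cohomology $HL^2(\mu_1\oplus\mu_1\oplus\C;\mu_1\oplus\mu_1\oplus\C)$, list a basis of non-equivalent 2-cocycles, and then verify that none of them, nor any linear combination, yields a symmetric Leibniz bracket admitting an invariant non-degenerate symmetric bilinear form. Fix basis vectors $e_1,\dots,e_5$ with the only nontrivial brackets $[e_1,e_1]=e_2$ and $[e_3,e_3]=e_4$, so $e_5$ is a central direct summand; the reference invariant form is the block diagonal $\mathrm{diag}(B_0,B_0,1)$ with $B_0=\bigl(\begin{smallmatrix}0&1\\1&0\end{smallmatrix}\bigr)$.

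First I would write out a general 2-cochain $\phi:\mathfrak{L}\otimes\mathfrak{L}\to\mathfrak{L}$ as a matrix of $25\times 5$ coefficients and impose the cocycle equation $\delta^2\phi=0$ from Section 2, using the sparse bracket to keep the linear system manageable. I would then mod out by the coboundary space $\mathrm{Im}\,\delta^1$ (derivations give known trivial perturbations, e.g. rescalings in each $\mu_1$-factor and shears involving $e_5$), and normalize representatives. Since the two $\mu_1$-blocks already produced $8$ classes in the $4$-dimensional computation, and the addition of a central $\C$-summand yields further cocycles supported on $e_5$ (entries of the form $\phi(e_5,-)$, $\phi(-,e_5)$, $\phi(e_i,e_j)=\alpha e_5$), I expect an explicit, if somewhat lengthy, enumeration.

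Second, I would eliminate at once all cocycles of the form $\phi(e_i,e_i)=e_i$ by invoking Proposition \ref{only at the inf level}: these are metric only infinitesimally and produce an unremovable $t^2$ obstruction. For each surviving representative (and for generic linear combinations, parametrized by the free coefficients), I would test whether the deformed bracket $[\,,\,]_t=[\,,\,]_0+t\phi$ still satisfies both the left and the right Leibniz identities, since by Proposition 2.1 a metric Leibniz algebra must be symmetric. As in the $\mathfrak{L}_2$ and $\mu_1\oplus\mu_1$ arguments, it is usually quickest to exhibit one triple $(x,y,z)$ on which symmetry fails, which immediately rules out metricity.

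The main obstacle will be bookkeeping: the cocycle space is expected to be of moderately high dimension because of the two abelian-like factors and the central element, so the combination space to screen is large. However, the symmetry obstruction kills most candidates very cheaply, and the remaining ones either reduce to the forbidden self-bracket shape $\phi(e_i,e_i)=e_i$ or fail non-degeneracy of any invariant form (since a nonzero symmetric invariant pairing with a deformed asymmetric bracket forces extra vanishing relations that collapse the Gram matrix). Once every class and every nontrivial combination is shown to violate either symmetry, the Leibniz identity at order $t^2$, or non-degeneracy of the invariant form, the theorem follows.
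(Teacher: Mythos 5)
Your plan follows the same route as the paper's proof: enumerate a basis of non-equivalent Leibniz 2-cocycles (the paper finds $29$, of which $8$ are inherited from the $\mu_1\oplus\mu_1$ computation), observe that the only metric candidate among the new ones is $\phi(e_5,e_5)=e_5$, and rule it out by Proposition \ref{only at the inf level}, with linear combinations also failing. Both arguments are ultimately reports of the same computation, so the proposal is correct and essentially identical in approach, though like the paper it leaves the explicit list of the $21$ additional cocycles unwritten.
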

\begin{proof}
 The second cohomology space has $29$ non-equivalent cocycles  in a basis. 
 This set contains the $8$ cocycles coming from cocycles already appeared in the four dimensional computation for $\mu_1\oplus\mu_1 $. From the additional $21$ cocycles, there is only one metric cocycle: $\phi(e_5,e_5)=e_5$, but that does not define a Leibniz algebra (see Proposition \ref{only at the inf level}), neither do any combinations of those.
\end{proof}


\[
\xymatrixrowsep{0.25in}
\xymatrixcolsep{0.15in}
\xymatrix{
                             &                                                      &                           & W_3\ar[d]\ar[lld] &                               & \lambda_2\oplus \C^2 \ar[rd]\ar[llldd]& \\
                             & \mathfrak{d}\oplus \C \ar[rd]\ar[ld]&                           & \widetilde{W_3}\ar[rd]\ar[ld]   &                               &                              & \mathfrak{R}_{20}(0)\oplus \C \ar[d]\\
sl(2,\C)\oplus \C^2&                                                      & \mathfrak{L}_2 \oplus \C &                          & \widetilde{W_3}^*&                              &\mu_1\oplus\mu_1 \oplus \C
 }
\]


\bigskip

{\bf{Figure 2:}} Metric deformations of 5-dimensional metric Leibniz algebras

\bigskip

\bibliographystyle{amsplain}


\end{document}